\newtheorem{thm}{Theorem}[section]
\newtheorem{theo}[thm]{Theorem}
\newtheorem{defn}[thm]{Definition}
\newtheorem{lem}[thm]{Lemma}
\newtheorem{prop}[thm]{Proposition}
\newtheorem{exam}{Example}[section]
\newtheorem{examp}[exam]{Example}
\title{The defining equations of a class of Richardson and flag varieties on $Sp_{2n}(k)$}
\author{Jiajun Xu}
\address{Jiajun Xu}
 \email{s1gh1995@sjtu.edu.cn}
\author{Guanglian Zhang}
\address{Guanglian Zhang\\ School of Mathematical Sciences\\
Shanghai Jiao Tong University\\
Shanghai 200240, China}
 \email{g.l.zhang@sjtu.edu.cn}
\thanks{*Corresponding author: Guanglian Zhang, g.l.zhang@sjtu.edu.cn}
\date{\today}
\begin{document}

\begin{abstract}
This paper aims to focus on Richardson varieties on symplectic groups, especially their combinatorial characterization and defining equations. Schubert varieties and opposite Schubert varieties have profound significance in the study of generalized flag varieties which are not only research objects in algebraic geometry but also ones in representation theory. A more general research object is Richardson variety, which is obtained by the intersection of a Schubert variety and an opposite Schubert variety. The structure of Richardson variety on Grassmannian and its combinatorial characterization are well-known, and there are also similar method on quotients of symplectic groups. In the first part of this paper, we calculate the orbit of the symplectic group action, and then rigorously give a method to describe the corresponding quotient by using the nesting subspace sequence of the linear space (i.e. flag). At the same time, the flag is used to describe the Schubert variety and Richardson variety on quotient of symplectic group. The flag varieties of $Sp_{2n}(k)/P_d$ can be viewed as closed subvarieties of Grassmannian. Using the standard monomial theory, we obtain the generators of its ideal, i.e. its defining equations, in homogeneous coordinate ring of Grassmannian. Furthermore, we prove several properties of the type C standard monomial on the symplectic group flag variety. Defining equations of Richardson varieties on $Sp_{2n}(k)/P_d$ are given as well.
\end{abstract}

\maketitle

\tableofcontents

\section{Introduction}
Schubert varieties and opposite Schubert varieties have profound significance in the study of generalized flag varieties. Its related cohomology theory on Grassmannian was first put forward by Hermann Schubert as early as 19th century, and later appeared in Hilbert's famous 23 problems as the 15th one as well. Among the singular algebraic varieties, Schubert varieties are ones which have better property on singularities. The regular functions on Schubert varieties are closely related to algebraic combinatorics and Hodge algebra. After 1990, William Fulton's work on Schubert polynomials greatly accelerated the research on Schubert varieties.

A more general research object is Richardson variety, which is obtained by the intersection of a Schubert variety and an opposite Schubert variety. Richardson variety was raised by R.W.Richardson in 1992, and earlier this concept also appeared in D.Kazhdan and G.Lusztig's famous 1979 article which focuses on representation of Coxeter group and Hecke algebra\cite{lusztig}.

On the other hand, the homogeneous coordinate ring of Richardson variety is also important. Utilizing Pl$\ddot{u}$cker embedding, the Pl$\ddot{u}$cker coordinates can be used to study the homogeneous coordinate ring of Richardson variety and gives an isomorphism between Grassmannian $G_{d,n}$ and a quotient of algebraic group $G=GL_n(k)$. Hodge first developed Standard Monomial Theory (SMT) in the study of coordinate ring of Schubert variety on type A Grassmannian, to point out a basis of homogeneous coordinate ring of Schubert variety on Grassmannian\cite{Hodge SMT1,Hodge SMT2}. The name of standard monomial comes from its close relation to standard Young tableaux.

This paper aims to focus on Richardson varieties on symplectic groups over algebraic closed field $k$. Using combinatorial characterization, i.e. the nesting sequence of subspaces flag, to study flag varieties is a classical method. The flag varieties on type A Grassmannian were richly studied in this way. As a subvariety of flag variety, the Schubert variety and Richardson variety are also characterized when they are on type A Grassmannian. However, the proof of this characterization of Schubert and Richardson variety in the literature is often not clear enough. In the first part (section 2 and 3) of this paper, we are going to introduce the flag variety, especially a class of ones on quotient of symplectic group and its combinatorial characterization, and rigorously give the method using sequence of subspaces to study this class of Schubert and Richardson varieties.

The second part (section 4) is our main results, these results provide tools for further research on singularities. The flag varieties of $Sp_{2n}(k)/P_d$ can be viewed as closed subvarieties of Grassmannian. Using the standard monomial theory, we obtain the generators of its ideal, i.e. its defining equations, in homogeneous coordinate ring of Grassmannian. For further notations, please see section 4.

\begin{theo}[Theorem 4.4]
 Let $G=Sp_{2n}(k),P=P_d$. For $U\in G_{d,2n}$,$U\in G/P$ if and only if \begin{itemize}
 \item[(1)]when $d=2$, $U$ is annihilated by $\Sigma_{t=1}^n p_{(t,n+t)}$.
                                                        \item[(2)]when $d\geq 3$, $U$ is annihilated by all the
 $$\Sigma_{t=1}^n (-1)^{\tau(i_1,i_2,\cdots,i_{d-2},t,n+t)}p_{(i_1,i_2,\cdots,i_{d-2},t,n+t)}$$where $(i_1,i_2,\cdots,i_{d-2})=\underline{i}\in I_{d-2,2n}$, and $\tau$ is the inverse number.\end{itemize}
\end{theo}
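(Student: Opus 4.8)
The plan is to build on the identification, established in Sections~2--3, of $G/P_d=Sp_{2n}(k)/P_d$ (with $1\le d\le n$) with the closed subvariety of the Grassmannian $G_{d,2n}$ whose points are the totally isotropic $d$-dimensional subspaces $U\subseteq V:=k^{2n}$ for the symplectic form $\langle\,,\,\rangle$ attached to $Sp_{2n}(k)$, normalised by $\langle e_t,e_{n+t}\rangle=1=-\langle e_{n+t},e_t\rangle$ for $1\le t\le n$ and all other pairings of basis vectors zero. After this reduction the theorem becomes the assertion that ``$U$ is isotropic'' is equivalent to the vanishing at $U$ of the displayed linear forms in the Pl\"ucker coordinates $p_{\underline j}$, $\underline j\in I_{d,2n}$; recall that the Pl\"ucker relations, already present in the ideal of $G_{d,2n}\subseteq\mathbb P(\wedge^d V)$, are exactly what forces a representing vector to be decomposable, so it suffices to argue with decomposable vectors.

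The engine of the proof is contraction against the bivector form $\omega=\sum_{t=1}^n e_t^{*}\wedge e_{n+t}^{*}\in\wedge^2 V^{*}$, i.e. the linear map $\iota_\omega\colon\wedge^d V\to\wedge^{d-2}V$ acting on decomposables by
\[
\iota_\omega(u_1\wedge\cdots\wedge u_d)=\sum_{1\le i<j\le d}(-1)^{i+j-1}\,\omega(u_i,u_j)\; u_1\wedge\cdots\widehat{u_i}\cdots\widehat{u_j}\cdots\wedge u_d .
\]
First I would prove the \emph{key lemma}: if $\eta=u_1\wedge\cdots\wedge u_d$ represents $U\in G_{d,2n}$, then $U$ is isotropic if and only if $\iota_\omega(\eta)=0$. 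One implication is immediate, since $\omega(u_i,u_j)=\langle u_i,u_j\rangle$. For the converse, the $(d-2)$-fold wedges $u_1\wedge\cdots\widehat{u_i}\cdots\widehat{u_j}\cdots\wedge u_d$, over $i<j$, are linearly independent in $\wedge^{d-2}V$ because $u_1,\dots,u_d$ are linearly independent in $V$, so $\iota_\omega(\eta)=0$ forces $\langle u_i,u_j\rangle=0$ for all $i<j$, i.e. isotropy. (When $d=2$ the target is $\wedge^0 V=k$ and $\iota_\omega(u_1\wedge u_2)=\langle u_1,u_2\rangle$, so the lemma is trivial there.)

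Next I would expand in the standard basis. Writing $\eta=\sum_{\underline j\in I_{d,2n}}p_{\underline j}(U)\,e_{\underline j}$ with $e_{\underline j}=e_{j_1}\wedge\cdots\wedge e_{j_d}$, a short computation gives, for $\underline j=(j_1<\cdots<j_d)$,
\[
\iota_\omega(e_{\underline j})=\sum_{\substack{1\le t\le n\\ \{t,n+t\}\subseteq\{j_1,\dots,j_d\}}}(-1)^{a(t)+b(t)-1}\,e_{\underline j\setminus\{t,n+t\}},
\]
where $t=j_{a(t)}$ and $n+t=j_{b(t)}$. Summing over $\underline j$ and isolating the coefficient of $e_{\underline i}$ for a fixed $\underline i=(i_1<\cdots<i_{d-2})\in I_{d-2,2n}$, one obtains $\sum_t(-1)^{a(t)+b(t)-1}\,p_{\underline j}(U)$, the sum running over $t$ with $t,n+t\notin\{i_1,\dots,i_{d-2}\}$ and $\underline j$ the increasing rearrangement of $(i_1,\dots,i_{d-2},t,n+t)$. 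The remaining sign bookkeeping reduces to the elementary identity
\[
\tau(i_1,\dots,i_{d-2},t,n+t)=2d-1-a(t)-b(t),
\]
whence $(-1)^{a(t)+b(t)-1}=(-1)^{\tau(i_1,\dots,i_{d-2},t,n+t)}$, and, with the convention that $p_{(\ell_1,\dots,\ell_d)}$ denotes the Pl\"ucker coordinate of the underlying $d$-element index set, the coefficient of $e_{\underline i}$ in $\iota_\omega(\eta)$ is precisely $\sum_{t=1}^n(-1)^{\tau(i_1,\dots,i_{d-2},t,n+t)}\,p_{(i_1,\dots,i_{d-2},t,n+t)}(U)$, the terms with a repeated index vanishing automatically. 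Since the $e_{\underline i}$, $\underline i\in I_{d-2,2n}$, form a basis of $\wedge^{d-2}V$, the condition $\iota_\omega(\eta)=0$ is equivalent to the simultaneous vanishing of all these forms. For $d\ge 3$ this is alternative~(2); for $d=2$ there is the single index set $\underline i=\emptyset$ and the form collapses to $\sum_{t=1}^n p_{(t,n+t)}(U)$, which is alternative~(1). Together with the key lemma, this proves the theorem.

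The step I expect to need the most care is this last one: choosing the sign conventions (in the definition of $\iota_\omega$, and in the meaning of $p$ on an unordered index tuple) so that the exponent $a(t)+b(t)-1$ coincides with the inversion number $\tau$ on the nose, and verifying that the degenerate cases --- $d=2$, or an index $t$ or $n+t$ already lying in $\underline i$ --- are absorbed consistently. Everything else is either handed to us by Sections~2--3 (the description of $G/P_d$ as the isotropic Grassmannian) or is standard (linear independence of coordinate wedges, and the fact that the Pl\"ucker relations are what cut $G_{d,2n}$ out of $\mathbb P(\wedge^d V)$). Note that the statement is set-theoretic, so no finer study of the ideal of $G/P_d$ beyond the Pl\"ucker relations is needed here; the same computation also realises the linear span of $G/P_d$ in $\mathbb P(\wedge^d V)$ as $\ker\iota_\omega$, the fundamental module, which is the geometric fact underlying the standard monomial basis used elsewhere in Section~4.
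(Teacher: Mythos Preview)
Your proof is correct and reaches the same conclusion as the paper, but via a genuinely different route. The paper argues entirely in coordinates: it fixes a basis $u_1,\dots,u_d$ of $U$, expands each $p_{(i_1,\dots,i_{d-2},t,n+t)}(U)$ by Laplace along the rows $t,n+t$, and reorganises the result as $\sum_{j_1<j_2}(-1)^{j_1+j_2+1}M(j_1,j_2)\,D(\underline i,j_1,j_2)$, where $M(j_1,j_2)=u_{j_1}^TJu_{j_2}$ and $D(\underline i,j_1,j_2)$ is a $(d-2)\times(d-2)$ minor of the coordinate matrix. Necessity is then immediate, and sufficiency is obtained by selecting $d$ rows on which $U$ is invertible and invoking a separate linear-algebra lemma (Lemma~4.3) saying that the matrix of all $s\times s$ subdeterminants of an invertible $r\times r$ matrix is itself invertible.

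Your approach packages this same computation intrinsically: the single contraction $\iota_\omega\colon\wedge^dV\to\wedge^{d-2}V$ replaces the family of Laplace expansions, and your key lemma---that the $\binom{d}{2}$ wedges $u_1\wedge\cdots\widehat{u_i}\cdots\widehat{u_j}\cdots\wedge u_d$ are linearly independent when $u_1,\dots,u_d$ are---is exactly the coordinate-free content of the paper's Lemma~4.3 in this situation. Your route is shorter and makes visible the extra structural fact you note at the end, namely that the linear span of $G/P_d$ in $\mathbb P(\wedge^dV)$ is $\ker\iota_\omega$; the paper's route, by contrast, needs no exterior-algebra machinery beyond the definition of Pl\"ucker coordinates and may be easier to follow for a reader thinking purely in terms of minors. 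Two small points to tidy if you write this up: your normalisation $\langle e_t,e_{n+t}\rangle=1$ is the negative of the paper's (from $J$ one gets $e_t^TJe_{n+t}=-1$), which is harmless for isotropy but worth flagging; and the paper's convention that $p_{(\ell_1,\dots,\ell_d)}$ carries \emph{no} sign under reordering is indeed what makes your inversion-number identity $\tau=2d-1-a(t)-b(t)$ land exactly on the stated formula.
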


Furthermore, we prove several properties of the type C standard monomial on the symplectic group flag variety. Defining equations of Richardson varieties on $Sp_{2n}(k)/P_d$ are given as well.\begin{theo}[Theorem 4.11]
  Let $G=Sp_{2n}(k),P=P_d$, $R(u,v)$ is Richardson variety on $G/P$. The ideal of $R(u,v)(u\geq^{Sp} v)$ in homogeneous coordinate ring of $G/P$ is $I(u,v)=(\{p_\alpha |\alpha >^{Sp}u\mbox{ or }\alpha <^{Sp}v\})$.
\end{theo}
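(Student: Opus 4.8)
The plan is to reduce the statement to the standard-monomial bases already available for $k[G/P]$ and for its Schubert and opposite Schubert subvarieties, and then to compare Hilbert functions. Write $\mathcal{R}=k[G/P]$ for the homogeneous coordinate ring, let $X^{Sp}(u)$ and $X_{Sp}(v)$ be the Schubert and opposite Schubert varieties attached to $u$ and to $v$ (notation as in Section~3), so that $R(u,v)=X^{Sp}(u)\cap X_{Sp}(v)$, and put $[v,u]=\{\alpha: v\le^{Sp}\alpha\le^{Sp}u\}$, which is nonempty since $u\ge^{Sp}v$. Denote by $I(u,v)$ the ideal in the statement and by $J(u,v)\subseteq\mathcal{R}$ the (homogeneous, radical) vanishing ideal of $R(u,v)$; the goal is the equality $I(u,v)=J(u,v)$.

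First I would verify the inclusion $I(u,v)\subseteq J(u,v)$. By the type C standard monomial theory for Schubert varieties developed above, $p_\alpha$ vanishes on $X^{Sp}(u)$ whenever $\alpha>^{Sp}u$, and $p_\alpha$ vanishes on $X_{Sp}(v)$ whenever $\alpha<^{Sp}v$; since $R(u,v)$ is contained in both, every generator of $I(u,v)$ lies in $J(u,v)$. Hence there is a graded surjection $\mathcal{R}/I(u,v)\twoheadrightarrow \mathcal{R}/J(u,v)=k[R(u,v)]$, and it suffices to prove it is an isomorphism, i.e.\ that the two sides have the same Hilbert function.

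For the source side I would use the type C straightening law (one of the properties of the type C standard monomial proved above) together with its sandwich property: when a product $p_{\beta_1}\cdots p_{\beta_r}$ is rewritten in the $Sp$-standard basis, every $Sp$-standard monomial $p_{\tau_1}\cdots p_{\tau_r}$ that occurs satisfies $\tau_1\le^{Sp}\beta_j\le^{Sp}\tau_r$ for all $j$. Consequently, if a generator $p_\alpha$ with $\alpha>^{Sp}u$ divides such a product then $\tau_r\ge^{Sp}\alpha>^{Sp}u$, and if $\alpha<^{Sp}v$ divides it then $\tau_1\le^{Sp}\alpha<^{Sp}v$; in either case the standard monomial is not supported in $[v,u]$. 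Conversely, a standard monomial with $\tau_r>^{Sp}u$ or $\tau_1<^{Sp}v$ is divisible by one of the listed generators. Since the $Sp$-standard monomials form a $k$-basis of $\mathcal{R}$, this shows that $I(u,v)$ is precisely the span of the $Sp$-standard monomials that are not supported in $[v,u]$, and hence that the images of the $Sp$-standard monomials $p_{\tau_1}\cdots p_{\tau_r}$ with $v\le^{Sp}\tau_1\le^{Sp}\cdots\le^{Sp}\tau_r\le^{Sp}u$ form a $k$-basis of $\mathcal{R}/I(u,v)$.

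What remains — and this is the main obstacle — is the matching lower bound for $k[R(u,v)]$: the $Sp$-standard monomials supported in $[v,u]$ must restrict to linearly independent functions on $R(u,v)$. In degree one this is immediate, since the $T$-fixed points $e_\tau$ with $\tau\in[v,u]$ all lie on $R(u,v)$ and $p_\sigma(e_\tau)\ne 0$ exactly when $\sigma=\tau$, which separates $\{p_\tau:\tau\in[v,u]\}$. For the higher graded pieces one needs that the type C standard monomial basis of $X^{Sp}(u)$ is compatible with restriction to its opposite Schubert subvarieties — equivalently, that the intersection $X^{Sp}(u)\cap X_{Sp}(v)$ is reduced and irreducible of the dimension predicted by the combinatorial (flag) description of Section~3 — so that $k[R(u,v)]$ carries the $Sp$-standard monomial basis indexed by $[v,u]$. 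I expect this reducedness/compatibility statement to be the principal difficulty; granting it, the Hilbert functions of $\mathcal{R}/I(u,v)$ and $k[R(u,v)]$ agree in every degree, the surjection above is an isomorphism, and therefore $I(u,v)=J(u,v)$, which is the assertion.
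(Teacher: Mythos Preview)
Your overall architecture---show $I(u,v)\subseteq J(u,v)$, then argue that the quotients $\mathcal R/I(u,v)$ and $k[R(u,v)]$ agree via standard monomials---is the same as the paper's. The difference lies in how each side handles the two halves of that comparison.

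For the description of $\mathcal R/I(u,v)$, you route through a straightening ``sandwich'' property. The paper does not need this: it simply writes an arbitrary $F\in\mathcal R$ as a combination of type~C standard monomials and splits them into those that contain some factor $p_\alpha$ with $\alpha\notin[v,u]$ (these visibly lie in $I(u,v)$) and those whose every factor lies in $[v,u]$ (these are, by definition, type~C standard on $R(u,v)$). No control on the straightening of non-standard products is required.

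The more important divergence is at the step you yourself flag as the obstacle: the linear independence on $R(u,v)$ of the type~C standard monomials supported in $[v,u]$. You propose to extract this from reducedness/irreducibility of the intersection, which is certainly one way to proceed but is not what the paper does and is not proved anywhere in it. The paper instead reduces to the type~A case already available in the literature (Proposition~4.2, cited from Kreiman--Lakshmibai). The mechanism is Proposition~4.10: under the permutation $s$ of Section~4.3 one has $R(u,v)=G/P\cap R^0(s^{-1}(u),s^{-1}(v))$, and type~C standard monomials on $R(u,v)$ become type~A standard monomials on the ambient Grassmannian Richardson variety $R^0$. A relation $\sum c_iF_i|_{R(u,v)}=0$ therefore means $\sum c_iF_i$, restricted to $R^0$, lies in the image of the ideal $I_1$ of $G/P$; combining this with the known linear independence of type~A standard monomials on $R^0$ forces $\sum c_iF_i\in I_1$, i.e.\ $\sum c_iF_i=0$ in $\mathcal R$. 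This single lemma is exactly what closes the gap you left open, and it does so without invoking any geometric reducedness statement for $R(u,v)$ itself.

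In short: your plan is correct in outline, but the paper's proof is both shorter (no sandwich property) and complete (your ``principal difficulty'' is dispatched by the reduction-to-type-A trick of Proposition~4.10, which you are missing).
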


\section{Flag varieties on quotients of $Sp_{2n}(k)$}
\subsection{Flag varieties on quotients of $GL_n$}

Generally, a flag is a nesting sequence of subspaces of some linear space $V$:\begin{equation*}
                                0=V_0\subset V_1\subset\cdots\subset V_s\subset V,
                              \end{equation*}
where $V_i$ are the subspaces of $V$.

If $dim V_i/V_{i-1}=1(i=1,2,\cdots,s)$ and $s=dim V$, it is called a full flag $Fl(n)$. And a partial flag consists of the following elements:
$$Fl(i_1,i_2,\cdots,i_s;n)=\{0\subset V_1\subset V_2\subset \cdots \subset V_s\subset k^n,dim V_t=i_t\}.$$

They are all classical examples of flag variety, then a natural question is where the algebraic variety structure comes from. The well-known result shows that the $GL_n(k)$ transitively acts on full and partial flags, and the isotropy groups are some parabolic subgroups $P$ of $GL_n(k)$. Thus we can use full and partial flags to characterize $GL_n(k)/P$.

\begin{defn}
  Let $G$ be a linear algebraic group, and $P$ is its parabolic subgroup. We call the quotient $G/P$ a flag variety.
\end{defn}

The Grassmannian $G_{d,n}=\{\mbox{dimension d subspaces of } k^n\}$ is the simplest flag. For any $U\in G_{d,n}$, let $\{u_1,u_2,\cdots,u_d\}$ be a basis of $U$. And let $\wedge^d k^n$ be the d-times exterior algebra of $k^n$.

Fix \begin{equation*}
                                e_1=\left(\begin{matrix}
                                  1 \\
                                  0 \\
                                  0\\
                                  \vdots \\
                                  0
                                \end{matrix}\right),e_2=\left(\begin{matrix}
                                                                0 \\
                                                                1 \\
                                                                0\\
                                                                \vdots \\
                                                                0
                                                              \end{matrix}\right),\cdots,e_n=\left(\begin{matrix}
                                                                                                     0 \\
                                                                                                     0 \\
                                                                                                     \vdots \\
                                                                                                     0 \\
                                                                                                     1
                                                                                                   \end{matrix}\right)
                              \end{equation*}
                              a standard basis of $V$. Then $\{e_{i_1}\wedge e_{i_2}\wedge \cdots \wedge e_{i_d}|1\leq i_1< i_2 <\cdots < i_d \leq n\}$ form a basis of $\wedge^d V$. We can utilize $i_1,i_2,\cdots,i_d$ to index the homogeneous coordinate of $\mathbb{P}(\wedge^d V)$.

Let \begin{equation*}
   I_{d,n}=\{\underline{i}=(i_1,i_2,\cdots,i_d)\in \mathbb{N}^d|1\leq i_1< i_2<\cdots<i_d\leq n\},
 \end{equation*}
and denote the standard basis of $\wedge^d V$ by\begin{equation*}
    e_{\underline{i}}=e_{i_1}\wedge e_{i_2}\wedge\cdots\wedge e_{i_d},\underline{i}=(i_1,i_2,\cdots,i_d).
  \end{equation*}

Let $\{p_{\underline{i}}|\underline{i}\in I_{d,n}\}$ be the basis of dual space $(\wedge^d V)^*$ which is dual to $\{e_{\underline{i}}|\underline{i}\in I_{d,n}\}$, i.e.\begin{equation*}
p_{\underline{i}}(e_{\underline{j}})=\left\{\begin{matrix}
                                         1 &,\underline{i}=\underline{j}\\
                                         0&,\underline{i}\neq\underline{j}
                                       \end{matrix}\right.
\end{equation*}

It is easy to see, $\{p_{\underline{i}}|\underline{i}\in I_{d,n}\}$ can be viewed as homogeneous coordinate functions on $\mathbb{P}(\wedge^d V)$, and also on the closed subvariety $G_{d,n}$. We have $U=[\Sigma_{\underline{i}\in I_{d,n}} p_{\underline{i}}(U)e_{\underline{i}}], \forall U\in G_{d,n}$. And by the linearity, if $U\in G_{d,n}$ has a basis\begin{equation*}
                                u_1=\left(\begin{matrix}
                                  u_{11} \\
                                  u_{21} \\
                                  \vdots \\
                                  u_{n1}
                                \end{matrix}\right),u_2=\left(\begin{matrix}
                                                                u_{12} \\
                                                                u_{22} \\
                                                                \vdots \\
                                                                u_{n2}
                                                              \end{matrix}\right),\cdots,u_d=\left(\begin{matrix}
                                                                                                     u_{1d} \\
                                                                                                     u_{2d} \\
                                                                                                     \vdots \\
                                                                                                     u_{nd}
                                                                                                   \end{matrix}\right),
                              \end{equation*}
then we can compute $p_{\underline{i}}(U)=det(A_{i_1i_2\cdots i_d})$, where $A_{i_1i_2\cdots i_d}$ is the matrix which uses the $i_1,i_2,\cdots,i_d$-th rows of $A=(u_{ij})_{n\times d}$ as $1,2,\cdots,d$-th rows\cite{Ri in Gras}. In the rest parts, we use the notation $p_ {i_1i_2\cdots i_d}=p_{\underline{i}}$.

In this paper, we will focus on the case that $G$ is symplectic group $Sp_{2n}(k)$ and $P$ is maximal parabolic subgroups of $G$. Since the general linear groups and symplectic groups are involved simultaneously, we say two column vectors $v,w$ in $k^n$ are orthogonal if $v^Tw=0$. And the existence and uniqueness of orthogonal complement is trivial.

In the rest parts, $X_{i\times i}$ means a undetermined invertible $i\times i$ matrix, and $*_{i\times j}$ means a undetermined $i\times j$ matrix. If there is no necessity to state the size of matrix, the index of $*$ will be dropped. The blank blocks are all $0$.

\subsection{Flag varieties on quotients of $Sp_{2n}(k)$}

The standard sypmlectic matrix is $$J=\left(\begin{matrix}
                  0 & -I_{n\times n} \\
                  I_{n\times n} & 0
                \end{matrix}\right)$$, where $I_{n\times n}$ is identity.

The symplectic group $Sp_{2n}(k)$ consists of symplectic matrices $M$ of order $2n$ such that $M^TJM=J$, where $M^T$ is the transpose of $M$. Partition $M$ into blocks $$\left(\begin{matrix}
                                                              A & B \\
                                                              C & D
                                                            \end{matrix}\right),$$ then $M$ is sympletic matrix if and only if the blocks satisfy \begin{equation}\label{1}
                                                                                                     \left\{\begin{matrix}
                                                                                                              A^TC-C^TA=0 \\
                                                                                                              B^TD-D^TB=0 \\
                                                                                                              A^TD-C^TB=I
                                                                                                            \end{matrix}\right.
                                                                                                   \end{equation}
Consider $1\leq d\leq n$, let

\begin{table}[!h]\resizebox{\linewidth}{!}{$P_d=\left\{\left.\left(\begin{matrix}
                          A & B \\
                          C & D
                        \end{matrix}\right)\in Sp_{2n}(k)\right|A=\left(\begin{matrix}
                                                                          X_{d\times d} & * \\
                                                                          0 & X_{(n-d)\times(n-d)}
                                                                        \end{matrix}\right),C=\left(\begin{matrix}
                                                                                                      0 & 0 \\
                                                                                                      0 & *_{(n-d)\times(n-d)}
                                                                                                    \end{matrix}\right)
                                                                                    \right\}.$}\end{table}

$GL_{2n}(k)$ acts on $G_{d,2n}$ naturally, $Sp_{2n}(k)$ acts on $G_{d,2n}$ as closed subgroup of $GL_{2n}(k)$. and consider the element $E_d\in G_{d,2n}$, $P_d$ is exactly the isotropy group. Then there is a bijection from $G/P_d$ to the orbit $G.E_d$. However, the action of $Sp_{2n}(k)$ on $G_{d,2n}$ is not transitive, the orbit is not the whole $G_{d,2n}$. We can compute this orbit
\begin{table}[!h]\resizebox{\linewidth}{!}{$\begin{array}{c}
G.E_d=\left\{Span_k\left\{\left.\left(\begin{matrix}
                              A_1 \\
                              C_1
                            \end{matrix}\right),\cdots,\left(\begin{matrix}
                              A_d \\
                              C_d
                            \end{matrix}\right)\right\}\right|A=(A_1,\cdots,A_n),C=(C_1,\cdots,C_n)\ \mbox{satisfies (1)}\right\}.
  \end{array} $}\end{table}

$A,C$ satisfy $A^TC-C^TA=0$, i.e. $\forall 1\leq i,j\leq n,A_i^TC_j-C_i^TA_j=0\Leftrightarrow \left(\begin{matrix}
                         A_i \\
                         C_i
                       \end{matrix}\right)\perp\left(\begin{matrix}
                                                       -C_j \\
                                                       A_j
                                                     \end{matrix}\right)$. Where $\left(\begin{matrix}
                                                       -C_j \\
                                                       A_j
                                                     \end{matrix}\right)=J\left(\begin{matrix}
                         A_j \\
                         C_j
                       \end{matrix}\right)$, $J$ is the standard symplectic matrix.

The statement above gives that the orbit is $\{U\in G_{d,2n}|JU\perp U\}$. In the later part of this paper, we will prove that it is a closed subvariety of $G_{d,2n}$, and obtain its defining equations. Then, $P_d(1\leq d\leq d)$ are parabolic subgroups of $Sp_{2n}(k)$, and it is easy to see $B=P_1\cap P_2\cap \cdots \cap P_n$ is a Borel subgroup of $Sp_{2n}(k)$. We choose this $B$ to be the standard Borel subgroup.
\section{Richardson varieties}
\subsection{Schubert vareties}

Let $G$ be a simple linear algebraic group, $T$ is a maximal torus of $G$, $B$ is a Borel subgroup of $G$ and contains $T$, $P$ is a parabolic subgroup of $G$ and contains $B$. By the Bruhat decomposition, there is a longest $w_0\in W/W_P$, such that $C(w_0)$ is dense in $G$, i.e. $C(w_0)$ is the big cell. Since $w_0$ has the maximal length, there must be $w_0^2=id\in W/W_P$.\\Let the image of conjugate action of $w_0$ on $B$ be $B^-=w_0Bw_0$, and it is called the opposite Borel subgroup of $B$. It is the unique Borel subgroup of $G$ such that $B^-\cap B=T$.\cite{Springer}

\subsubsection{Schubert varieties, opposite Schubert varieties and Bruhat order}
Consider $\pi: G\to G/P$ is the canonical morphism, and denote $e_w:=\pi w.P\in G/P,w\in W/W_P$. Without causing confusion, we still denote $\pi w.P$ in $G/P$ by $w.P$. Then $\{e_w,w\in W/W_P\}$ are exactly all the fixed point of $T$-action on $G/P$.

\begin{defn}
  The closure of $B$-orbit $B.e_w$ in $G/P$ is called the Schubert variety in $G/P$ corresponding to $w\in W/W_P$. Denote by $X_w=\overline{B.e_w}$.
\end{defn}
Then the Bruhat-Chevalley order (or simply, Bruhat order) is given.

\begin{defn}
  (Bruhat Order)For $w,w'\in W/W_P$, we say $w\geq w'$ if and only if $X_w\supset X_{w'}$.
\end{defn}
For the Schubert variety $X_w$, we have

\begin{prop}\cite{Springer}
  \begin{itemize}
    \item[(1)] $G/P=\cup_{w\in W/W_P} B.e_w$;
    \item[(2)] $X_w=\cup_{\theta\in W/W_P,\theta\leq w} B.e_\theta$;
    \item[(3)] $dim X_w=l(w),\forall w\in W/W_P$.
  \end{itemize}
\end{prop}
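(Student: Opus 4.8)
The plan is to deduce all three statements from the Bruhat decomposition of $G$ together with elementary facts about orbits of connected algebraic groups, reducing as much as possible to the full flag case $G/B$. Throughout I would use that every coset in $W/W_P$ contains a unique representative of minimal length, and abuse notation by letting $w$ denote that representative. Write $B=TU$ with $U$ the unipotent radical of $B$, and for $w\in W$ set $U_w=\prod_{\alpha>0,\,w^{-1}\alpha<0}U_\alpha$; this is a closed subgroup of $U$ which, as a variety, is isomorphic to affine space $\mathbb{A}^{l(w)}$. The point of working with minimal-length representatives is that then no root coming from the Levi factor of $P$ interacts with $w$, which is what makes the bookkeeping in (3) clean.

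For (1): the Bruhat decomposition gives $G=\bigsqcup_{w\in W}BwB$, and grouping the double cosets according to their right $W_P$-coset yields $G=\bigsqcup_{w\in W/W_P}BwP$, the union over minimal-length representatives (one uses the exchange condition to see that $BwB\cdot W_PB=BwP$, and that distinct minimal representatives give disjoint double cosets). Applying the canonical morphism $\pi\colon G\to G/P$ and noting $\pi(BwP)=BwP/P=B.e_w$, we obtain $G/P=\bigsqcup_{w\in W/W_P}B.e_w$, which is (1) (in fact with disjointness, so these are exactly the $B$-orbits).

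For (2): by (1), $G/P$ is a \emph{finite} union of the $B$-orbits $B.e_\theta$. Since $B$ is connected and acts morphically, the closure $X_w=\overline{B.e_w}$ is again $B$-stable, hence is a union of some of the $B.e_\theta$, with $B.e_w$ as its unique open dense orbit (it is open in its closure, and it is the only orbit of full dimension $\dim X_w$). Therefore $X_w=\bigcup\{B.e_\theta : B.e_\theta\subseteq X_w\}$. Finally, because $X_\theta=\overline{B.e_\theta}$ and $X_w$ is closed, the containment $B.e_\theta\subseteq X_w$ is equivalent to $X_\theta\subseteq X_w$, i.e.\ to $\theta\le w$ by the definition of the Bruhat order adopted in this section; this is (2).

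For (3): it suffices to compute $\dim B.e_w$, since $X_w$ is its closure and closure preserves dimension. The isotropy group of $e_w=wP/P$ in $B$ is $B\cap wPw^{-1}$, so $B.e_w\cong B/(B\cap wPw^{-1})$; decomposing $B=TU$ and tracking which positive root subgroups survive, one checks that the orbit map restricts to an isomorphism of varieties $U_w\cong B.e_w$, $u\mapsto u.e_w$, whence $\dim X_w=\dim U_w=l(w)$. The main obstacle is exactly this root-theoretic step: one must verify that when $w$ is the minimal-length representative of $wW_P$, the ``vanishing'' directions of the $B$-action on $e_w$ are precisely $T$ together with $U^{+}\cap wPw^{-1}$, so that the complementary directions are $U_w$ and they both act freely and sweep out the whole orbit — and the same minimality is what is implicitly needed in (1) to see that distinct minimal representatives yield disjoint cosets $BwP$. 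Everything else is formal, or can be quoted from \cite{Springer}.
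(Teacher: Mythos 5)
Your argument is correct and is exactly the standard one: the paper does not prove this proposition at all but simply cites Springer, and your sketch (Bruhat decomposition grouped by minimal coset representatives for (1), orbit-closure and $B$-stability for (2), and the isomorphism $U_w\cong B.e_w$ for (3)) reconstructs the proof given in that reference. The only point worth flagging is that with the paper's Definition 3.2, where $\theta\le w$ is \emph{defined} by $X_\theta\subseteq X_w$, part (2) reduces, as you note, to the purely orbit-theoretic fact that $\overline{B.e_w}$ is a union of $B$-orbits, so no combinatorial characterization of the order is needed there.
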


\begin{defn}
  Let $C(w_0)$ be the big cell, $w_0\in W/W_P$. Then for $v\in W/W_P$, $X^v=w_0. X_{w_0 v}=\overline{B^- e_v}$ is called the opposite Schubert variety in $G/P$ corresponding to $v$.
\end{defn}

Notice that Schubert variety is the closure of orbit, so $X_w\supset X_{w'}$ if and only if $e_{w'}\in X_w$. And for opposite Schubert variety, there is $X_w\supset X_{w'}$ if and only if $X^w\subset X^{w'}$.

\begin{prop}\cite{Springer}
  For any $w\in W/W_P$, there is a representative $\dot{w}\in W$, $\dot{w}$ can be exactly written as a product of $l(\dot{w})$ reflections of simple roots. This product is called a reduced expression of $\dot{w}$. The Bruhat order $w\geq w'$ if and only if any reduced expression of $\dot{w}$ contains a reduced presentation of $\dot{w'}$.
\end{prop}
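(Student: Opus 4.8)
The plan is to split the statement into two parts that can be treated independently: (i) the existence of the distinguished representative $\dot w\in W$ and of its reduced expression, which is a purely Coxeter-theoretic fact about the parabolic quotient $W/W_P$ (here $W_P$ is the standard parabolic generated by the subset $S_P\subset S$ of simple reflections cut out by $P$, compatibly with the choice $B=P_1\cap\cdots\cap P_n$); and (ii) the subexpression (``subword'') characterization of the Bruhat order, which has to be linked to the geometric definition $X_w=\overline{B.e_w}$, $w\geq w'\Leftrightarrow X_w\supset X_{w'}$. I would deduce (i) from the Exchange/Deletion condition for Coxeter groups, and establish (ii) by relating the geometry of $G/P$ to the combinatorics through the minimal parabolics $P_s\supset B$ attached to simple reflections $s$ and the projections $\pi_s\colon G/B\to G/P_s$.

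For (i): starting from an arbitrary lift of $w$ to $W$, multiply on the right by a simple reflection $s\in S_P$ whenever this strictly decreases the length; since $l(\cdot)$ takes values in $\mathbb{N}$ the procedure terminates at an element $\dot w$ of the coset $wW_P$ satisfying $l(\dot w s)>l(\dot w)$ for all $s\in S_P$. The Exchange condition then shows that $\dot w$ is the unique element of minimal length in $wW_P$ and that $l(\dot w v)=l(\dot w)+l(v)$ for every $v\in W_P$; in particular $l(\dot w)$ is well defined and coincides with the number $l(w)$ used in the preceding Proposition. Every element of a Coxeter group has a reduced expression by the very definition of the word length, so $\dot w=s_{i_1}\cdots s_{i_r}$ with $r=l(\dot w)$ is the required reduced expression.

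For (ii), one implication reads: if $\dot{w'}=s_{i_{j_1}}\cdots s_{i_{j_k}}$ with $j_1<\cdots<j_k$ is a reduced subexpression of a reduced expression $\dot w=s_{i_1}\cdots s_{i_r}$, then $e_{w'}\in X_w$, hence $X_{w'}\subset X_w$. I would induct on $r$, using the standard facts that for a simple reflection $s$ one has $\overline{P_s.e_u}=X_{su}$ when $l(su)>l(u)$ and $P_sX_u=X_u$ when $l(su)<l(u)$; iterating identifies $X_w$ with the image in $G/P$ of the Bott--Samelson variety $Z=P_{s_{i_1}}\times^B\cdots\times^B P_{s_{i_r}}/B$ attached to the chosen reduced word, whose $T$-fixed points surject onto all products $s_{i_1}^{\varepsilon_1}\cdots s_{i_r}^{\varepsilon_r}$, $\varepsilon\in\{0,1\}^r$; in particular $\dot{w'}$ produces a $T$-fixed point $e_{w'}$ of $X_w$. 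The reverse implication --- $e_{w'}\in X_w$ implies that $\dot{w'}$ occurs as a reduced subexpression of the given reduced expression of $\dot w$ --- is the Strong Exchange Property, which I would prove by induction on $r$ via the projection $\pi_{s_{i_1}}$: with $w''=s_{i_1}w$ and $l(w'')=r-1$, the relation $X_w=P_{s_{i_1}}X_{w''}$ forces either $e_{w'}\in X_{w''}$, or $e_{s_{i_1}w'}\in X_{w''}$ with $l(s_{i_1}w')<l(w')$; the inductive hypothesis applied to $s_{i_2}\cdots s_{i_r}$ then exhibits $\dot{w'}$, respectively $s_{i_1}\dot{w'}$, as a reduced subword, and prepending $s_{i_1}$ settles the second case. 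Since the reduced expression of $\dot w$ was arbitrary throughout, the ``any reduced expression'' form follows; combining the two implications gives the equivalence with $X_w\supset X_{w'}$, that is, with $w\geq w'$.

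I expect the main obstacle to be the reverse implication, namely the Strong Exchange Property: one must keep track, coset by coset in $W/W_P$, of the two possible positions of $e_{w'}$ with respect to the $P_{s_{i_1}}$-orbit structure and verify that the dichotomy ``$e_{w'}\in X_{w''}$, or $e_{s_{i_1}w'}\in X_{w''}$ with a strict drop in length'' is genuinely exhaustive --- this is already the delicate point in the case $P=B$ --- and one must also check that the ``reduced subword'' bookkeeping descends correctly from $G/B$ to $G/P$, i.e.\ that minimal coset representatives behave well under $\pi\colon G/B\to G/P$. Alternatively, since this is exactly the classical subword property of the Bruhat order on a Coxeter group, one may quote it directly from \cite{Springer} after identifying the geometric Bruhat order of the Definition above with the combinatorial one, an identification furnished by part (2) of the preceding Proposition together with the remark $X_w\supset X_{w'}\Leftrightarrow e_{w'}\in X_w$.
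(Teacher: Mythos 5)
The paper offers no proof of this proposition: it is quoted verbatim from the literature (\cite{Springer}), so there is no argument of the authors' to compare yours against. Your sketch is the standard proof and is essentially sound. Part (i) — descending within the coset $wW_P$ by right multiplication with $s\in S_P$, then invoking the Exchange condition for uniqueness of the minimal representative and the additivity $l(\dot w v)=l(\dot w)+l(v)$ — is exactly the classical argument. For part (ii), the forward implication via the Bott--Samelson variety (whose $T$-fixed points surject onto all subproducts $s_{i_1}^{\varepsilon_1}\cdots s_{i_r}^{\varepsilon_r}$, hence every reduced subword yields a $T$-fixed point $e_{w'}\in X_w$) and the reverse implication via the dichotomy coming from $X_w=P_{s_{i_1}}X_{w''}$ are the two halves of the usual geometric proof of the subword property. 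Your dichotomy is correct, but for completeness note that the case $e_{s_{i_1}w'}\in X_{w''}$ with $l(s_{i_1}w')>l(w')$ collapses into the first case by the lifting property ($w'<s_{i_1}w'\leq w''$ then gives $e_{w'}\in X_{w''}$ directly), which is precisely why the dichotomy is exhaustive. The two points you flag as delicate are indeed the ones requiring care: the exhaustiveness just mentioned, and the descent from $G/B$ to $G/P$, which rests on $\pi(X_{\dot w})=X_w$ together with the fact that for minimal coset representatives $w\geq w'$ in $W/W_P$ if and only if $\dot w\geq\dot{w'}$ in $W$. Since the paper uses this proposition only as a citation, either your full argument or your suggested shortcut (quoting the combinatorial subword property after identifying the geometric and combinatorial orders via the preceding Proposition) is acceptable.
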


\begin{prop}\cite{Springer,Sara generalized}
  $X_w\cap X^v\neq \emptyset$ if and only if $w\geq v$.
\end{prop}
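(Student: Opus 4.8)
The statement splits into two implications; the forward one ($w\ge v\Rightarrow X_w\cap X^v\neq\emptyset$) is essentially immediate, and the content lies in the converse, which I would deduce from the Borel fixed point theorem applied to the maximal torus $T$. The plan rests on two order-theoretic equivalences that I would record first, for $\theta\in W/W_P$:
\[ e_\theta\in X_w\iff\theta\le w,\qquad e_\theta\in X^v\iff\theta\ge v. \]
The first is exactly the Section 3 remark that $X_w\supseteq X_{w'}\iff e_{w'}\in X_w$, combined with the definition of the Bruhat order. For the second: if $e_\theta\in X^v$, then since $X^v=\overline{B^-e_v}$ is closed and $B^-$-stable we get $X^\theta=\overline{B^-e_\theta}\subseteq X^v$, and by the recorded equivalence $X_\theta\supseteq X_v\iff X^\theta\subseteq X^v$ this forces $\theta\ge v$; the converse direction is the same chain read backwards, using $e_\theta\in X^\theta$. (Alternatively one can first prove the mirror of the cell decomposition $X_w=\bigcup_{\theta\le w}B.e_\theta$, namely $X^v=\bigcup_{\theta\ge v}B^-.e_\theta$, by applying $w_0$ and using that $w_0$ reverses the Bruhat order on $W/W_P$; then both equivalences are read off directly.)

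For the forward implication, if $w\ge v$ then $e_v\in X_w$ by the first equivalence, while $e_v\in B^-e_v\subseteq X^v$ always holds; hence $e_v\in X_w\cap X^v$, so the intersection is nonempty.

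For the converse, suppose $X_w\cap X^v\neq\emptyset$. This intersection is closed in the projective variety $G/P$, hence complete, and it is $T$-stable because $X_w$ is $B$-stable, $X^v$ is $B^-$-stable, and $T\subseteq B\cap B^-$. The Borel fixed point theorem, applied to the connected solvable group $T$, then produces a $T$-fixed point in $X_w\cap X^v$; since the $T$-fixed locus of $G/P$ is precisely $\{e_\theta\mid\theta\in W/W_P\}$, there is some $\theta$ with $e_\theta\in X_w\cap X^v$. The two recorded equivalences give $\theta\le w$ and $\theta\ge v$, and transitivity of the Bruhat order yields $w\ge v$.

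The only non-formal input is the existence of a $T$-fixed point in the nonempty intersection, so the Borel fixed point theorem together with the identification of the $T$-fixed locus with $W/W_P$ is really the heart of the argument; everything else is bookkeeping with facts already assembled in Section 3. The one place that needs care is applying the inclusion equivalences in the correct direction and checking that $X_w\cap X^v$ is indeed both complete and $T$-stable.
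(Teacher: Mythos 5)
Your proof is correct, and it is precisely the standard argument for this fact (the one found in the cited references, e.g.\ Billey--Coskun and Brion's lectures): the paper itself only cites the result without proof, so there is no in-text argument to compare against. All the ingredients you invoke --- the definition of the Bruhat order via inclusion of Schubert varieties, the equivalence $X_w\supseteq X_{w'}\iff X^w\subseteq X^{w'}$, and the identification of the $T$-fixed locus of $G/P$ with $\{e_\theta\mid\theta\in W/W_P\}$ --- are already recorded in Section~3 of the paper, and your use of the Borel fixed point theorem on the closed, $T$-stable, complete intersection is exactly the right (and essentially only) non-formal step.
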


\subsubsection{Schubert varieties on $Sp_{2n}(k)/P_d$}

Consider $G=Sp_{2n}(k)$, choose one of its maximal tori $$T=\left\{\left.\left(\begin{matrix}
                                                         a_1 & 0   & 0    &        & &\\
                                                         0    & a_2& 0    &        & &\\
                                                         0    & 0  &\cdots&   0    & &\\
                                                                &    &  0   & a_n  &          &\\
                                                                &    &      &      &  a_1^{-1} & 0       & 0    &      &\\
                                                                &    &      &      &   0       & a_2^{-1}& 0    &      &\\
                                                                &    &      &      & 0         & 0       &\cdots&   0  &\\
                                                                &    &      &      &           &         &  0   & a_n^{-1}
                                                       \end{matrix}\right)\in Sp_{2n}(k)\right|a_i\in k
                                                       \right\}.$$

Let $B$ be a standard Borel subgroup of $G$ as described in section 2.2, $B$ contains $T$:
$$B=\left\{\left.\left(\begin{matrix}
                                                         A & B \\
                                                         0 & D
                                                       \end{matrix}\right)\in Sp_{2n}(k)\right|A\mbox{is upper triangular $n\times n$ matrix}\right\}.$$

$P=P_d(1\leq d\leq n)$ are maximal parabolic subgroups:
$$P_d=\left\{\left.\left(\begin{matrix}
                          A & B \\
                          C & D
                        \end{matrix}\right)\in Sp_{2n}(k)\right|A=\left(\begin{matrix}
                                                                          X_{d\times d} & * \\
                                                                           & X_{(n-d)\times(n-d)}
                                                                        \end{matrix}\right),C=\left(\begin{matrix}
                                                                                                      0 & 0 \\
                                                                                                      0 & X_{(n-d)\times(n-d)}
                                                                                                    \end{matrix}\right)
                                                                                    \right\}.$$

There are many different notations for the Weyl group of symplectic group, here we use the most natural way:

For $1\leq i\leq 2n$, let
$$\hat{i}=\left\{\begin{matrix}
                   i &,1\leq i\leq n  \\
                   i-n &,n <i \leq 2n.
                 \end{matrix}\right. $$
And consider $M\in N_G(T)$, $M^{-1}TM\subset T$, then
$$\begin{array}{c}
M^{-1}diag(a_1,a_2,\cdots,a_n,a^{-1}_1,a^{-1}_2,\cdots,a^{-1}_n)M\\
                                                       =diag(
                                                         a_{s(1)}^{\pm1},a_{s(2)}^{\pm1},\cdots,a_{s(n)}^{\pm1},a_{s(1)}^{\mp1},a_{s(2)}^{\mp1},\cdots,a_{s(n)}^{\mp1})\end{array}$$
                                                                where $diag$ means the diagonal matrix and $s\in S_n$.

Consequently, the Weyl group of $G$ can be written as:
$$W=\{(i_1,i_2,\cdots, i_n,i_{n+1},\cdots, i_{2n})\in S_{2n}|\hat{i_t}=\hat{i_{t+n}},\forall 1\leq i\leq n\}$$
The elements in $N_G(T)$ and the elements in the Weyl group $W=N_G(T)/T$ of $G$ are corresponded in the following way:
\begin{equation*}
  \begin{array}{c}
     N_G(T)\to W \\
    (e_{i_1},e_{i_2},\cdots,e_{i_{2n}})\mapsto (i_1,i_2,\cdots ,i_n,i_{n+1},\cdots ,i_{2n})
  \end{array}
\end{equation*}

The Weyl group $W_{P}$ of $P$ is:
$$\begin{array}{c}W_P=\{(i_1,i_2,\cdots ,i_d,i_{d+1},\cdots ,i_n,i_{n+1},\cdots,i_{2n})\in S_{2n}|(i_1,i_2,\cdots, i_d)\in S_d,\\\hat{i_t}=\hat{i_{t+n}},\forall 1\leq t\leq n\}\end{array}$$

Hence the first $d$ factors of the elements in $W/W_P$ can be commutated, through choosing suitable representatives we can denote the elements in $W/W_P$ by
$$\begin{array}{c}
    W/W_P=\{(i_1,\cdots,i_r,i_{r+1},\cdots, i_d,i_{d+1},\cdots ,i_n,i_{n+1},\cdots, i_{2n})\in S_{2n}|\\ 1\leq i_1< i_2<\cdots<i_r< n,\\2n\geq i_{r+1}>i_{r+2}>\cdots>i_d>n,\\1\leq i_{d+1}<\cdots< i_n< n,\\ \hat{i_t}=\hat{i_{t+n}},\forall 1\leq i\leq n\}.
  \end{array}$$
where $r$ is the maximal number such that $1\leq r\leq d$ and $i_r\leq n$.

The element $(i_1,i_2,\cdots, i_d,i_{d+1},\cdots, i_n,i_{n+1},\cdots, i_{2n})$ in $W/W_P$ can be uniquely determined by the part $i_1,i_2,\cdots,i_d$. Hence we abbreviate it to $(i_1,i_2,\cdots, i_d)$.

For example $d=4,n=5$, we abbreviate $(1,5,8,7,4,6,10,2,3,9)\in W/W_P$ to $(1,5,8,7)$.

\begin{defn}
$$\begin{array}{c}
I^{Sp}_{d,2n}:=\{\underline{i}=(i_1,i_2,\cdots, i_d)|1\leq i_1\leq i_2\leq i_r\leq n,2n\geq i_{r+1}\geq\cdots\geq i_d > n,\\\hat{i_t}\neq \hat{i_s},\forall t\neq s\}.\end{array}$$
\end{defn}

Then the elements in $I^{Sp}_{d,2n}$ are uniquely corresponded to the elements in $W/W_P$.

Next we consider the Schubert varieties on $G/P$. To begin with, recall $e_{\underline{i}}=\underline{i}.P\in G/P$. And by section 2.2, $G/P$ exactly consists of all the flags $U\subset k^{2n}$ satisfying
$$dim U=d,JU^\perp=U.$$

$P\in G/P$ is just flag $E_d$. Hence $e_{\underline{i}}=\underline{i}.P=$
\begin{equation*}
Span_k\{e_{i_1},\cdots,e_{i_d}\}
\end{equation*}

Define a series of subspaces of $k^{2n}$:
\begin{equation*}
  \begin{array}{llr}
  K_0=0& &\\
    K_1=E_1&=Span_k\left\{e_1\right. &\left.\right\}  \\
    K_2=E_2&=Span_k\left\{e_1,e_2\right. &\left.\right\}\\
    &\cdots &\\
    K_n=E_n&=Span_k\left\{e_1,e_2\cdots,e_n\right. &\left.\right\}\\
    K_{n+1}&=Span_k\left\{e_1,e_2,\cdots,e_n,\right. &\left.e_{n+1},e_{n+2},\cdots,e_{2n}\right\}\\
    K_{n+2}&=Span_k\left\{e_1,e_2,\cdots,e_n,\right. &\left.e_{n+2},\cdots,e_{2n}\right\}\\
    &\cdots&\\
    K_{2n}&=Span_k\left\{e_1,e_2,\cdots,e_n,\right. &\left.e_{2n}\right\}
  \end{array}
\end{equation*}

Then we have
\begin{equation*}
  \begin{array}{l}
    B.e_{\underline{i}} \\
   =\{U\in G/P|dim(U\cap K_{i_t})=t> dim (U\cap K_{\sigma(i_t)}),\forall 1\leq t\leq d. \}
  \end{array}
\end{equation*}

Where $$\sigma(i)=\left\{\begin{matrix}
                 i-1 &, \forall 1\leq i\leq n \\
                 i+1 &, \forall n<i<2n \\
                 n &, i=2n\\
                 0 &,else
               \end{matrix}\right.$$

$\sigma$ gives a new order on $\{1,2,\cdots,2n\}$: for $1\leq i,j\leq 2n$, we say $i\geq^{Sp} j$, if there exists positive integer $a$ satisfying $\sigma^a(i)=j$.

Thanks to \cite{singular loci}, the Bruhat order on $W/W_P$ (also denote by $\geq^{Sp}$, differ from $GL_n(k)$ case) satisfies that for any $\underline{i},\underline{j}\in I^{Sp}_{d,2n}$,
$$\underline{i}\geq^{Sp} \underline{j}\Leftrightarrow i_t\geq^{Sp} j_t,\mbox{i.e.}\left\{\begin{matrix}
                                                          i_t\geq j_t &,\mbox{if }j_t\leq n \\
                                                          n <i_t\leq j_t &,\mbox{if }j_t> n
                                                       \end{matrix}\right.,\forall 1\leq t\leq d.$$

So the Schubert variety
\begin{equation*}
  \begin{array}{lcl}
    X_{\underline{i}}&=&\overline{B.e_{\underline{i}}}=\cup_{\underline{j}\leq^{Sp}\underline{i}}B.e_{\underline{j}} \\
     &=&\cup_{\underline{j}\leq^{Sp}\underline{i}} \{U\in G/P|dim(U\cap K_{j_t})=t>dim (U\cap K_{\sigma(j_t)}),\forall 1\leq t\leq d. \\
     &=&\{U\in G/P|dim(U\cap K_{i_t})\geq t,\forall 1\leq t\leq d.\}\}
  \end{array}
\end{equation*}

The opposite Borel subgroup corresponding to $B$ is
$$\left\{\left.\left(\begin{matrix}
                       A & 0 \\
                       C & D
                     \end{matrix}\right)\in Sp_{2n}(k)\right|A\mbox{ is lower triangular}\right\}$$

The opposite Schubert variety is
\begin{equation*}
  \begin{array}{lcl}
    X^{\underline{i}} & = & \overline{B^-.e_{\underline{i}}}=\cup_{\underline{j}\geq^{Sp}\underline{i}}B.e_{\underline{j}} \\
    &=&\{U\in G/P|dim(U\cap K_{\sigma(i_t)})\leq t-1,\forall 1\leq t\leq d.\} \\
  \end{array}
\end{equation*}

\subsection{Richardson varieties}

Let $G$ be a connected linear algebraic group, $T$ is a maximal torus, $B$ is Borel subgroup of $G$ and contains $T$, $P$ is a parabolic subgroup of $G$ and contains $B$.

\begin{defn}
  A Richardson variety on $G/P$ is supposed to be the intersection of some Schubert variety $X_u$ and opposite Schubert variety $X^v$, where $u,v\in W/W_P$. And we denote this Richardson variety by $R(u,v)$.
  $$R(u,v)=X_u\cap X^v.$$
\end{defn}

We have $R(u,v)\neq \emptyset$ if and only if $u\geq v$ under the Bruhat order on $W/W_P$. So unless specifically noted, we always assume $R(u,v)\neq \emptyset$, i.e.$u\geq v$, below.

In the later part of this paper, we are going to discuss the relation between the Richardson varieties on Grassmannian $G_{d,2n}$ and those on $Sp_ {2n}(k)/P_d$. In order to distinguish them, we denote the Richardson varieties on Grassmannian by $R^0(u,v)$.

\section{The homogeneous coordinate rings of Richardson varieties}
For a connected linear algebraic group $G$, every flag variety $G/P$ is projective variety($P$ is parabolic subgroup of $G$). Hence the homogeneous coordinate ring of $G/P$ is the quotient of polynomial ring module the ideal of $G/P$. To describe the homogenous coordinate rings of flag varieties, Hodge raised the Standard Monomial Theory and used this to describe the homogeneous coordinate ring of Grassmannian. Let $G$ be the symplectic group $Sp_{2n}(k)$ and recall its maximal parabolic subgroup $P_d$. We have proved in section 2.2 that $G/P_d$ can be viewed as a subset of Grassmannian for $G=Sp_{2n}(k)$
$$G/P_d=\{U\in G_{d,2n}|JU\perp U\}.$$

Utilizing the Standard Monomial Theory, we will prove that $G/P_d$ is a closed subvariety in Grassmannian, and give the specific ideal of $G_{d,2n}$ in the homogeneous coordinate ring of $G_{d,2n}$. Moreover, we give the ideal of Richardson variety on $G/P_d$ in the homogeneous coordinate ring of $G/P_d$.

\subsection{Standard Monomials}
Let $V=k^n$, $\wedge^d k^n$ is the d-th exterior algebra of $k^n$, $\{e_i|1\leq i\leq n\}$ is standard basis of $k^n$. Then $\{e_ {\underline{i}}|\underline{i}\in I_{d,n}\}$ is a set of basis of $\wedge^d k^n$, and let its dual basis in $(\wedge^d k^n)^*$ be $\{p_ {\underline{i}}|\underline{i}\in I_{d,n}\}$, just as same as section 2.1. For $U=u_1\wedge u_2\wedge \cdots \wedge u_d\in \wedge^d k^n$, the value of coordinate function $p_{\underline{i}}(U)$ can be given by the determinant of the matrix which contains $u_{i_1},u_{i_2},\cdots,u_{i_d}$ as the $1,\cdots,d$-th row respectively.

We should notice that $I_{d,n}$ is 1-1 responded to the quotient of the Weyl group of $GL_n(k)$ module the Weyl group of a maximal parabolic subgroup of $GL_n(k)$, hence the Bruhat order on the last one induces an order on $I_{d,n}$, we call it the Bruhat order on $I_{d,n}$. That is
$$\underline{i}\geq \underline{j}\Leftrightarrow i_t\geq j_t,\forall 1\leq t\leq d.$$
\begin{defn}\cite{SMT}
  For $F=p_{w_1}p_{w_2}\cdots p_{w_m}$, where $w_1,w_2,\cdots,w_m\in I_{d,n}$, if $w_1\geq w_2\geq\cdots\geq w_m$, then we say $F$ is a standard monomial of type A on $\wedge^d V$ with degree $m$ (or simply a standard monomial).
\end{defn}

\begin{prop}\cite{SMT}\cite{Ri in Gras}
  \begin{itemize}
    \item[(1)]  For any a set of distinct standard monomials, they are $k$-linear independent as linear maps on $\wedge^d k^n$; Their restriction on $R^0(u,v)$ is linear independent as functions on $R^0(u,v)$.
    \item[(2)] All the standard monomials of degree $m$ form a basis of the degree $m$ grade of $G_{d,n}$.
  \end{itemize}
\end{prop}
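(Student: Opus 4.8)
The plan is to run the three classical steps of Standard Monomial Theory. First, using the quadratic Pl\"ucker relations I would show by a straightening algorithm that the standard monomials of degree $m$ span the degree-$m$ piece $k[G_{d,n}]_m$ of the homogeneous coordinate ring. Second, I would show that their number equals $\dim_k k[G_{d,n}]_m$, so that spanning forces them to be a basis; linear independence over $\wedge^d k^n$ of an arbitrary finite set of distinct standard monomials then follows, since monomials of different degrees are automatically independent by homogeneity. Third, I would transfer both conclusions to $R^0(u,v)$ by checking that straightening respects the Bruhat interval $[v,u]\subseteq I_{d,n}$ and combining this with the description of the ideal of $R^0(u,v)$.

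For the spanning step, suppose $F=p_{w_1}\cdots p_{w_m}$ is not standard, with $w_i\in I_{d,n}$. Since the $p_{w_i}$ commute we may assume two adjacent rows $\alpha:=w_j$ and $\beta:=w_{j+1}$ are incomparable in the componentwise order. The quadratic Pl\"ucker relation rewrites $p_\alpha p_\beta$ as a $\mathbb{Z}$-linear combination of products $p_\gamma p_\delta$ in which every $\gamma$ satisfies $\gamma\ge\alpha$ and $\gamma\ge\beta$, every $\delta$ satisfies $\delta\le\alpha$ and $\delta\le\beta$ (componentwise), and the inequalities $\gamma>\alpha$, $\gamma>\beta$ are strict because $\alpha,\beta$ are incomparable. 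Substituting into $F$ and iterating, each step raises the monomial in a fixed linear refinement of the natural order on tableaux while preserving the degree; since $I_{d,n}$ is finite, the process terminates and expresses $F$, hence every element of $k[G_{d,n}]_m$, as a $k$-combination of standard monomials of degree $m$. This step is bookkeeping with the Pl\"ucker relations and I do not expect it to be the obstacle.

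For the independence step, the standard monomials of degree $m$ are in bijection with semistandard Young tableaux of rectangular shape $d\times m$ with entries in $\{1,\dots,n\}$ (list the rows $w_m,w_{m-1},\dots,w_1$ as the successive columns), so their number is the dimension of the irreducible $GL_n(k)$-module of highest weight $m\,\omega_d$, with $\omega_d$ the $d$-th fundamental weight; by the Borel--Weil theorem together with the projective normality of $G_{d,n}$ in the Pl\"ucker embedding this equals $\dim_k k[G_{d,n}]_m$. Hence the standard monomials of degree $m$ are a $k$-basis of $k[G_{d,n}]_m$, which gives both parts of the statement over $G_{d,n}$. (If one prefers to avoid the representation-theoretic input, independence can instead be shown directly on the big affine cell, where $p_{(1,\dots,d)}$ is a unit and the remaining Pl\"ucker coordinates are the matrix-entry functions, by checking that distinct standard monomials have distinct leading terms for a suitable monomial order.) This is the step I expect to be the genuine obstacle: straightening only yields a spanning set, and promoting it to a basis needs either this Hilbert-function computation or the leading-term analysis; everything else is then comparatively formal.

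For $R^0(u,v)$, I would use that $p_\alpha$ vanishes on the Schubert variety $X^0_u\subseteq G_{d,n}$ when $\alpha\not\le u$ and on the opposite Schubert variety $X^{0,v}$ when $\alpha\not\ge v$, hence on $R^0(u,v)$ whenever $\alpha\notin[v,u]$, and that $I(R^0(u,v))$ is generated by the $p_\alpha$ with $\alpha\notin[v,u]$ (the type-A counterpart of Theorem~4.11; see \cite{Ri in Gras}). The bounds from the spanning step show that straightening keeps track of the interval: if some row $w_j$ of a monomial lies outside $[v,u]$, say $w_j\not\le u$, then each $\gamma$ produced satisfies $\gamma\ge w_j\not\le u$ and so lies outside $[v,u]$ as well, and symmetrically if $w_j\not\ge v$ then each $\delta$ lies outside $[v,u]$; by induction on the degree, straightening a monomial containing a row outside $[v,u]$ yields only standard monomials each containing a row outside $[v,u]$. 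Therefore $k[G_{d,n}]_m=A\oplus B$, where $A$ is the span of the degree-$m$ standard monomials with all rows in $[v,u]$ and $B$ the span of the rest; the generation statement gives $B=I(R^0(u,v))_m$, so the restriction map carries $A$ isomorphically onto $k[R^0(u,v)]_m$. Thus the degree-$m$ standard monomials with all rows in $[v,u]$ restrict to a $k$-basis of $k[R^0(u,v)]_m$ — in particular they are linearly independent on $R^0(u,v)$ — which is exactly assertion (1) for Richardson varieties, and the Richardson analogue of (2) as a byproduct.
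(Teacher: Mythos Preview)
The paper does not supply a proof of this proposition: it is stated with citations to \cite{SMT} and \cite{Ri in Gras} and nothing further. Your outline is therefore not competing with an argument in the paper; it is a faithful sketch of the classical proof those references contain (Pl\"ucker straightening for spanning, a tableau/Hilbert-function count or leading-term argument for independence on $G_{d,n}$, and the ideal description of $R^0(u,v)$ for the transfer), and is correct in strategy.

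One point in the Richardson step deserves tightening. The inequality $\gamma\ge w_j$ you invoke is only immediate when the offending row $w_j$ is one of the two rows actually being straightened at that moment; in a longer monomial a single straightening move may not touch $w_j$ at all, so your ``induction on the degree'' has to be organized with more care than the one line suggests. The route taken in \cite{Ri in Gras} avoids tracking the bad row through the algorithm: one first proves SMT for the Schubert variety $X^0_u$ (and dually for $X^{0,v}$), so that standard monomials with all rows $\le u$ are known to be linearly independent on $X^0_u$; restricting the straightening identity of a monomial with some $w_j\not\le u$ to $X^0_u$ then kills the left side and forces every surviving coefficient to vanish, which is exactly the claim that only bad standard monomials occur. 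This gives your decomposition $k[G_{d,n}]_m=A\oplus B$ with $B=I(R^0(u,v))_m$ without the bookkeeping. You might also note that the paper's phrasing of part~(1) is slightly loose: arbitrary standard monomials do not remain independent upon restriction to $R^0(u,v)$, since any factor $p_\alpha$ with $\alpha\notin[v,u]$ restricts to zero; what is true, and what you in fact establish, is independence of the standard monomials all of whose rows lie in $[v,u]$.
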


Fix some permutation of $d$ elements ,i.e. $s\in S_d$, and consider the automorphism of $\wedge^d k^n$ $$e_{i_1}\wedge e_{i_2}\wedge \cdots \wedge e_{i_d}\mapsto e_{s(i_1)}\wedge e_{s(i_2)}\wedge\cdots\wedge e_{s(i_d)},$$then this is a automorphism of algebraic varieties, so induces a automorphism of coordinate rings. The Bruhat order on $I_{d,n}$ induces a new order on the image through this permutation, which might be different from the origin Bruhat order on $I_{d,n}$, and the images of standard monomials are still a set of basis of homogeneous coordinate ring of $G_{d,n}$.

\subsection{The defining equations of $Sp_{2n}(k)/P_d$}
For $G=Sp_{2n}(k)$ and its maximal parabolic subgroup $P=P_d$, we have already known that the flag of $G/P$ is
$$\{U\in k^{2n}|JU\perp U,dim U=d\}.$$

In this part we will prove $G/P_d$ is the closed subvariety of $G_{d,2n}$. And its defining equations, i.e. its ideal, will be rigorously given. If $d=1$, the result is trivial, $G/P$ is just exactly $G_{1,2n}$. The case of $d=2$ is different from ones of $d\geq 3$, we are going to discuss the case of $d=2$ and the relative simpler case of $d=3$, to introduce our main result.

\begin{examp}
Let $d=2,n=3$. Assume there is a set of basis of $U\in G_{2,6}$
$$u_1=\left(\begin{matrix}
                x_{11} \\
                x_{21} \\
                x_{31} \\
                x_{41} \\
                x_{51}\\
                x_{61}
              \end{matrix}\right),u_2=\left(\begin{matrix}
                x_{12} \\
                x_{22} \\
                x_{32} \\
                x_{42} \\
                x_{52} \\
                x_{62}
              \end{matrix}\right),$$
Then $U\in G/P$ if and only if $u_1^TJu_2=0$, or equivalently
$$ x_{11}x_{42}+x_{21}x_{52}+x_{31}x_{62}-x_{41}x_{12}-x_{51}x_{22}-x_{61}x_{32}=0,$$
$$\Leftrightarrow \left|\begin{matrix}
                          x_{11} & x_{12} \\
                          x_{41} & x_{42}
                        \end{matrix}\right|+\left|\begin{matrix}
                          x_{21} & x_{22} \\
                          x_{51} & x_{52}
                        \end{matrix}\right|+\left|\begin{matrix}
                          x_{31} & x_{32} \\
                          x_{61} & x_{62}
                        \end{matrix}\right|=0,$$
$$\Leftrightarrow (p_{(1,4)}+p_{(2,5)}+p_{(3,6)})(U)=0.$$
\end{examp}

We should adopt the convention that, for $1\leq i_1, i_2,\cdots,i_d\leq 2n$ (not necessary to be increasing or decreasing strictly),$$\begin{array}{l}p_{(i_1,i_2,\cdots,i_d)}\\=\left\{\begin{matrix}
           p_{(s(i_1),s(i_2),\cdots,s(i_d))}&,\exists s\in S_{2n}\mbox{such that}s(i_1)< s(i_2)<\cdots <s(i_d)  \\
           0 &, \exists i_t=i_{t'},t \neq t'.
         \end{matrix}\right.\end{array},$$

We extend the definition of inverse number $\tau$ as well:
$$\tau(i_1,i_2,\cdots,i_d)=\left\{\begin{matrix}
                                    \Sigma_{t=1}^d \#\{l<t|i_l>i_t\} &, i_t\neq i_{t'},\forall t\neq t' \\
                                    0 &,  \exists i_t=i_{t'},t\neq t'
                                  \end{matrix}\right..$$

\begin{examp}
  Let $d=3,n=3$. Assume there is a set of basis of $U\in G_{3,6}$
$u_1,u_2,u_3$ and the matrix which respectively uses $u_1,u_2,u_3$ as the $1,2,3$-th column is(denote by $U$ as well):
$$U=\left(\begin{matrix}
                x_{11} &x_{12}&x_{13} \\
                x_{21} &x_{22}&x_{23}\\
                x_{31} &x_{32}&x_{33}\\
                x_{41} &x_{42}&x_{43}\\
                x_{51} &x_{52}&x_{53}\\
                x_{61} &x_{62}&x_{63}
              \end{matrix}\right).$$

Then
$$p_{(i_1,i_2,i_3)}(U)=det(U_{i_1i_2i_3})=\left|\begin{matrix}
                                                  x_{i_11} & x_{i_12} & x_{i_13} \\
                                                  x_{i_21} & x_{i_22} & x_{i_23} \\
                                                  x_{i_31} & x_{i_32} & x_{i_33}
                                                \end{matrix}\right|,\forall (i_1,i_2,i_3)\in I_{3,6}.$$

Consider the fact that $U\in G/P$ if and only if $$\left\{\begin{matrix}
                           u_1^TJu_2=0 \\
                           u_1^TJu_3=0 \\
                           u_2^TJu_3=0
                         \end{matrix}\right.,$$
$$\Leftrightarrow \left\{\begin{matrix}
                           x_{11}x_{42}-x_{41}x_{12}+x_{21}x_{52}-x_{51}x_{22}+x_{31}x_{62}-x_{61}x_{32}=0 \\
                           x_{11}x_{43}-x_{41}x_{13}+x_{21}x_{53}-x_{51}x_{23}+x_{31}x_{63}-x_{61}x_{33}=0 \\
                           x_{12}x_{43}-x_{42}x_{13}+x_{22}x_{53}-x_{52}x_{23}+x_{32}x_{63}-x_{62}x_{33}=0
                         \end{matrix}\right..$$

Notice that $p_{(1,2,5)}(U)=det(U_{125})$ and expand $U_{(125)}$ by the first row, $p_{(1,2,5)}(U)=$
$$x_{11}(x_{22}x_{53}-x_{52}x_{23})-x_{12}(x_{21}x_{53}-x_{51}x_{23})+x_{13}(x_{21}x_{52}-x_{51}x_{22}).$$

Similarly, we have $p_{(1,3,6)}(U)=det(U_{136})=$
$$x_{11}(x_{32}x_{63}-x_{62}x_{33})-x_{12}(x_{31}x_{63}-x_{61}x_{33})+x_{13}(x_{31}x_{62}-x_{61}x_{32}).$$

Moreover, because
$$\begin{array}{c}x_{11}(x_{12}x_{43}-x_{42}x_{13})-x_{12}(x_{11}x_{43}-x_{41}x_{13})+x_{13}(x_{11}x_{42}-x_{41}x_{12})\\=\left|\begin{matrix}
                                                  x_{11} & x_{12} & x_{13} \\
                                                  x_{11} & x_{12} & x_{13} \\
                                                  x_{41} & x_{42} & x_{43}
                                                \end{matrix}\right|=0=p_{(1,1,4)}(U),\end{array}$$

we can compute $(p_{(1,1,4)}+p_{(1,2,5)}+p_{(1,3,6)})(U)=$
$$\begin{array}{r}
    x_{11}(x_{12}x_{43}-x_{42}x_{13})-x_{12}(x_{11}x_{43}-x_{41}x_{13})+x_{13}(x_{11}x_{42}-x_{41}x_{12}) \\    +x_{11}(x_{22}x_{53}-x_{52}x_{23})-x_{12}(x_{21}x_{53}-x_{51}x_{23})+x_{13}(x_{21}x_{52}-x_{51}x_{22})\\
    +x_{11}(x_{32}x_{63}-x_{62}x_{33})-x_{12}(x_{31}x_{63}-x_{61}x_{33})+x_{13}(x_{31}x_{62}-x_{61}x_{32})
\end{array}$$
$$\begin{array}{rl}
    =&x_{11}(x_{12}x_{43}-x_{42}x_{13}+x_{22}x_{53}-x_{52}x_{23}+x_{32}x_{63}-x_{62}x_{33})\\
    &-x_{12}(x_{11}x_{43}-x_{41}x_{13}+x_{21}x_{53}-x_{51}x_{23}+x_{31}x_{63}-x_{61}x_{33})\\
    &+x_{13}(x_{11}x_{42}-x_{41}x_{12}+x_{21}x_{52}-x_{51}x_{22}+x_{31}x_{62}-x_{61}x_{32})\\
    =&0
  \end{array}$$

In the same manner we can see that
$$((-1)^{\tau(t,1,4)}p_{(t,1,4)}+(-1)^{\tau(t,2,5)}p_{(t,2,5)}+(-1)^{\tau(t,3,6)}p_{(t,3,6)})(U)=0,\forall 1\leq t\leq 6,$$
where $\tau$ is the inverse number, adding the factor $(-1)^{\tau(t,1,4)}$ makes every coefficient of $x_{t1}$ in $p_{(t,1,4)}$ to be $+1$.

Let the homogeneous ideal generated by $\{(-1)^{\tau(t,1,4)}p_{(t,1,4)}+(-1)^{\tau(t,2,5)}p_{(t,2,5)}+(-1)^{\tau(t,3,6)}p_{(t,3,6)},$\\$\forall 1\leq t\leq 6\}$ in coordinate ring of $\wedge^d k^{2n}$ is $I$. Then we see that for every $U\in G/P$, all the elements of $I$ can annihilate $U$. Next we prove that $I$ is the homogeneous ideal defining $G/P$, equivalently every matrix $U$ which can be annihilated by all functions in $I$ satisfies
$$\left\{\begin{matrix}
                           x_{11}x_{42}-x_{41}x_{12}+x_{21}x_{52}-x_{51}x_{22}+x_{31}x_{62}-x_{61}x_{32}=0 \\
                           x_{11}x_{43}-x_{41}x_{13}+x_{21}x_{53}-x_{51}x_{23}+x_{31}x_{63}-x_{61}x_{33}=0 \\
                           x_{12}x_{43}-x_{42}x_{13}+x_{22}x_{53}-x_{52}x_{23}+x_{32}x_{63}-x_{62}x_{33}=0
                         \end{matrix}\right..$$

Because $U\in G_{3,6}$, $rank U=3$. That implies there must be $3$ rows in $U$, assumed to be the $t_1,t_2,t_3$-th rows, such that $U_{t_1t_2t_3}$ has rank $3$.

Assume $U$ can be annihilated by $I$, consequently there must be
$$((-1)^{\tau(t,1,4)}p_{(t,1,4)}+(-1)^{\tau(t,2,5)}p_{(t,2,5)}+(-1)^{\tau(t,3,6)}p_{(t,3,6)})(U)=0,t=t_1,t_2,t_3.$$

That implies
$$\left(\begin{array}{c}x_{12}x_{43}-x_{42}x_{13}+x_{22}x_{53}-x_{52}x_{23}+x_{32}x_{63}-x_{62}x_{33}\\
-(x_{11}x_{43}-x_{41}x_{13}+x_{21}x_{53}-x_{51}x_{23}+x_{31}x_{63}-x_{61}x_{33})\\
x_{11}x_{42}-x_{41}x_{12}+x_{21}x_{52}-x_{51}x_{22}+x_{31}x_{62}-x_{61}x_{32}\end{array}\right)$$is a solution of the homogeneous linear equations(where $X=(x_1,x_2,x_3)^T$ is unknown)
$$\left(\begin{array}{c}x_{i_11}x_1+x_{i_12}x_2+x_{i_13}x_3\\x_{i_21}x_1+x_{i_22}x_2+x_{i_23}x_3\\x_{i_31}x_1+x_{i_32}x_2+x_{i_33}x_3\end{array}\right)=U_{i_1i_2i_3}X=0$$. Obviously we see it must be $0$. Then $U\in G/P$.\end{examp}

For any square matrix $A=(a_{ij})_{n\times n}$,$1\leq i_1<i_2<\cdots <i_s\leq r$ and $1\leq j_1<j_2<\cdots< i_s\leq r$, i.e. $\underline{i},\underline{j}\in I_{s,r}$, we denote the subdeterminants of order $s$ of $A$ by
$$A\left\{\begin{matrix}
            \underline{i} \\
            \underline{j}
          \end{matrix}\right\}=A\left\{\begin{matrix}
            i_1 & i_2 & \cdots & i_d \\
            j_1 & j_2 & \cdots & j_d
          \end{matrix}\right\}=det(\left(\begin{matrix}
                                           a_{i_1j_1} & a_{i_1j_2} & \cdots & a_{i_sj_s} \\
                                           a_{i_2j_1} & a_{i_2j_2} & \cdots & a_{i_2j_s} \\
                                           &\cdots &  &  \\
                                           a_{i_sj_1} & a_{i_sj_2} & \cdots & a_{i_sj_s}
                                         \end{matrix}\right)).$$

The corresponding cofactors of $A$ is the determinant of the matrix with order $r-s$ which obtained through deleting $i_1,i_2,\cdots,i_s$-th rows and $j_1,j_2,\cdots,j_s$-th columns of $A$. We denote it by
$$A\left[\begin{matrix}
           \underline{i} \\
           \underline{j}
         \end{matrix}\right]=A\left[\begin{matrix}
            i_1 & i_2 & \cdots & i_d \\
            j_1 & j_2 & \cdots & j_d
          \end{matrix}\right].$$

\begin{lem}
If a matrix $A$ of order $r$ is invertible, $s\leq r$, then the homogeneous linear equations
$$\Sigma_{\underline{j}\in I_{s,r}}A\left\{\begin{matrix}
                                                  \underline{i} \\
                                                  \underline{j}
                                         \end{matrix}\right\}x_{\underline{j}}=0,\forall \underline{i}\in I_{s,r}
$$ only has solution $(x_{\underline{j}})_{I_{s,r}}$ zero.\end{lem}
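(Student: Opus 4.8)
The plan is to identify the coefficient matrix of this system with the matrix of the $s$-th exterior power of $A$, and then use functoriality of $\wedge^s$ to see it is invertible.

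First I would regard $A$ as the endomorphism of $k^r$ with $e_j\mapsto\sum_{i}a_{ij}e_i$ and pass to $\wedge^s A\colon\wedge^s k^r\to\wedge^s k^r$, which sends $e_{\underline j}=e_{j_1}\wedge\cdots\wedge e_{j_s}$ to $(Ae_{j_1})\wedge\cdots\wedge(Ae_{j_s})$. Expanding this product and collecting the coefficient of $e_{\underline i}$ (with $i_1<\cdots<i_s$) is the classical computation showing
$$\wedge^s A(e_{\underline j})=\sum_{\underline i\in I_{s,r}}A\left\{\begin{matrix}\underline i\\\underline j\end{matrix}\right\}e_{\underline i},$$
so that the matrix of $\wedge^s A$ in the ordered basis $\{e_{\underline i}\mid\underline i\in I_{s,r}\}$ is precisely $\big(A\{\underline i;\underline j\}\big)_{\underline i,\underline j}$. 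Consequently the homogeneous system in the statement is exactly $\wedge^s A\cdot(x_{\underline j})_{\underline j\in I_{s,r}}=0$.

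Since $\wedge^s$ is a functor on finite-dimensional $k$-vector spaces it is multiplicative, $\wedge^s(MN)=(\wedge^s M)(\wedge^s N)$, with $\wedge^s(\mathrm{id}_{k^r})=\mathrm{id}$. Because $A$ is invertible, applying this with $M=A$, $N=A^{-1}$ gives $(\wedge^s A)(\wedge^s A^{-1})=\mathrm{id}$, so $\wedge^s A$ is invertible. Hence $\wedge^s A\cdot x=0$ forces $x=0$, which is the assertion. Concretely this is the Cauchy–Binet identity, saying the compound matrices satisfy $C_s(A)C_s(A^{-1})=C_s(AA^{-1})=C_s(I_r)=I$, or equivalently the Sylvester–Franke formula $\det C_s(A)=(\det A)^{\binom{r-1}{s-1}}\neq 0$. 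This is precisely the fact that will let us conclude, as in the $d=3$ example above, that a vector built from Plücker coordinates and lying in the kernel of a full-rank submatrix must vanish.

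The argument is essentially routine multilinear algebra; the only point requiring care is the bookkeeping in the identification above — matching the signs hidden in the convention that $\underline i,\underline j$ are taken strictly increasing — but this is exactly the standard statement that the entries of an exterior power matrix are the minors of the original, so no genuine obstacle arises. If one preferred a self-contained proof avoiding functoriality of $\wedge^s$, the only real "work" would be re-deriving Cauchy–Binet via a generalized Laplace expansion, which amounts to the same computation.
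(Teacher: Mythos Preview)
Your argument is correct. Identifying the coefficient matrix with the matrix of $\wedge^s A$ and invoking functoriality (equivalently, Cauchy--Binet for compound matrices) is a clean and standard way to see invertibility.

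The paper takes a more explicit, computational route: it writes the coefficient matrix as $B=\big(A\{\gamma_i;\gamma_j\}\big)$ and then exhibits a concrete matrix $C$ of signed complementary minors, $C=\big((-1)^{\tau(\gamma_i)+\tau(\gamma_j)}A[\gamma_j;\gamma_i]\big)$, and uses Laplace expansion to verify $BC=\det(A)\cdot I$. This is exactly the ``generalized Laplace expansion'' alternative you allude to at the end of your proposal. Your approach is shorter and avoids the sign bookkeeping entirely by hiding it in the functoriality of $\wedge^s$; the paper's approach is self-contained and produces an explicit inverse without invoking exterior algebra, which fits its hands-on style. Either is perfectly adequate here, and neither requires any input beyond elementary linear algebra.
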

\begin{proof}
  Let us first fix some total order on $I_{s,r}$(for example, dictionary order), equivalently give a bijection between $I_{s,r}$ and $\{1,2,\cdots,C_r^s\}\subset \mathbb{Z}$, where $C_r^s$ is the combinatorial number. Denote the element in $I_{s,r}$ which corresponds to $m\in \{1,2,\cdots,C_s^r\}$ through this bijection is $\gamma_m$, that is just the $m$-th element in $\gamma_m$ under the total order on $I_{s,r}$(from less to greater). Let
$$B=(A\left\{\begin{matrix}
                                                  \gamma_i \\
                                                  \gamma_j
                                         \end{matrix}\right\})_{C_r^s\times C_r^s},$$then the homogeneous equations above can be written as
$$\left(\begin{array}{c}\Sigma_{m=1}^{C_r^s}A\left\{\begin{matrix}
                                                  \gamma_1 \\
                                                  \gamma_m
                                         \end{matrix}\right\}x_{\gamma_m}\\
   \Sigma_{m=1}^{C_r^s}A\left\{\begin{matrix}
                                                  \gamma_2 \\
                                                  \gamma_m
                                         \end{matrix}\right\}x_{\gamma_m}\\
   \cdots\\
   \Sigma_{m=1}^{C_r^s}A\left\{\begin{matrix}
                                                  \gamma_{C_r^s} \\
                                                  \gamma_m
                                         \end{matrix}\right\}x_{\gamma_{C_r^s}\gamma_m}\end{array}\right)=BX=0 $$
where $X=(x_{\gamma_1},x_{\gamma_2},\cdots,x_{\gamma_{C_r^s}})^T$ is unknown.

By the Laplace expansion of matrix,
$$\Sigma_{m=1}^{C_r^s} (-1)^{(\tau(\gamma_t)+\tau(\gamma_m))}A\left\{\begin{matrix}
                                                  \gamma_t \\
                                                  \gamma_m
                                         \end{matrix}\right\}A\left[\begin{matrix}
                                                  \gamma_t \\
                                                  \gamma_m
                                         \end{matrix}\right]=det(A),\forall 1\leq t\leq C_r^s,$$
where $\tau(\underline{i})=i_1+i_2+\cdots+i_s$, and as a result $$\Sigma_{m=1}^{C_r^s} (-1)^{(\tau(\gamma_t)+\tau(\gamma_m))}A\left\{\begin{matrix}
                                                  \gamma_t \\
                                                  \gamma_m
                                         \end{matrix}\right\}A\left[\begin{matrix}
                                                  \gamma_t \\
                                                  \gamma_l
                                         \end{matrix}\right]=0,\forall 1\leq t\leq C_r^s,l\neq m.$$

Let
$$C=((-1)^{(\tau(\gamma_i)+\tau(\gamma_j)}A\left[\begin{matrix}
                                                  \gamma_j \\
                                                  \gamma_i
                                         \end{matrix}\right])_{C_r^s\times C_r^s}.$$

we obtain that $BC=det(A)I_{C_r^s\times C_r^s}$ is scalar multiplication of identity. That means $B$ is invertible ,the equations only have solution zero. \end{proof}

\begin{theo} Let $G=Sp_{2n}(k),P=P_d$. For $U\in G_{d,2n}$,$U\in G/P$ if and only if \begin{itemize}
 \item[(1)]when $d=2$, $U$ is annihilated by $\Sigma_{t=1}^n p_{(t,n+t)}$.
                                                        \item[(2)]when $d\geq 3$, $U$ is annihilated by all the
 $$\Sigma_{t=1}^n (-1)^{\tau(i_1,i_2,\cdots,i_{d-2},t,n+t)}p_{(i_1,i_2,\cdots,i_{d-2},t,n+t)}$$where $(i_1,i_2,\cdots,i_{d-2})=\underline{i}\in I_{d-2,2n}$, and $\tau$ is the inverse number.\end{itemize}
\end{theo}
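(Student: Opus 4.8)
The plan is to reduce both implications to a single determinantal identity coming from the generalized Laplace expansion, after which the nontrivial direction follows at once from Lemma~4.3. Write $U=(x_{ij})_{2n\times d}$ with columns $u_1,\dots,u_d$, and recall from Section~2.2 that $U\in G/P$ precisely when $U^{T}JU=0$, i.e. when $\omega_{jj'}:=u_j^{T}Ju_{j'}=0$ for all $1\le j<j'\le d$ (the diagonal entries vanish automatically since $J$ is skew). Fix $\underline{i}=(i_1,\dots,i_{d-2})\in I_{d-2,2n}$. For each $t$, the quantity $(-1)^{\tau(\underline{i},t,n+t)}p_{(\underline{i},t,n+t)}(U)$ equals the determinant of the $d\times d$ matrix $M(\underline{i},t)$ whose successive rows are rows $i_1,\dots,i_{d-2},t,n+t$ of $U$, the factor $(-1)^{\tau}$ being exactly the sign of the permutation that sorts this index string. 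Expanding $\det M(\underline{i},t)$ by Laplace along its last two rows gives
\[
(-1)^{\tau(\underline{i},t,n+t)}p_{(\underline{i},t,n+t)}(U)=\sum_{1\le j<j'\le d}(-1)^{j+j'+1}\bigl(x_{tj}x_{n+t,j'}-x_{tj'}x_{n+t,j}\bigr)\,N_{\underline{i}}(j,j'),
\]
where $N_{\underline{i}}(j,j')$ is the $(d-2)\times(d-2)$ minor of $U$ on rows $i_1,\dots,i_{d-2}$ and columns $\{1,\dots,d\}\setminus\{j,j'\}$.

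The crucial point is that $N_{\underline{i}}(j,j')$ does not involve $t$. Summing over $t=1,\dots,n$ and using $\sum_{t=1}^{n}\bigl(x_{tj}x_{n+t,j'}-x_{tj'}x_{n+t,j}\bigr)=-\omega_{jj'}$ therefore yields the master identity
\[
\sum_{t=1}^{n}(-1)^{\tau(\underline{i},t,n+t)}p_{(\underline{i},t,n+t)}(U)=-\sum_{1\le j<j'\le d}(-1)^{j+j'+1}\,\omega_{jj'}\,N_{\underline{i}}(j,j').
\]
The ``only if'' direction is then immediate: if $U\in G/P$ every $\omega_{jj'}$ vanishes, so the right-hand side, hence each expression in the statement, is zero. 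For $d=2$ the identity degenerates (the empty minor being $1$) to $\sum_{t}p_{(t,n+t)}(U)=-\omega_{12}$, which recovers Example~4.1 and settles both directions of case~(1) with no rank argument.

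For the ``if'' direction with $d\ge 3$, assume $U\in G_{d,2n}$ is annihilated by all the stated functions; since $\operatorname{rank}U=d$ we may pick rows $t_1<\dots<t_d$ with $B:=U_{t_1\cdots t_d}$ invertible. Restrict the master identity to those $\underline{i}$ whose entries form a $(d-2)$-subset $\{t_{l_1},\dots,t_{l_{d-2}}\}$ of $\{t_1,\dots,t_d\}$; as $(l_1,\dots,l_{d-2})$ runs over $I_{d-2,d}$ we obtain all of them. For such $\underline{i}$ the minor $N_{\underline{i}}(j,j')$ is literally the minor $B\{\underline{l},\underline{m}\}$ with $\underline{l}=(l_1,\dots,l_{d-2})$ and $\underline{m}$ the increasing rearrangement of $\{1,\dots,d\}\setminus\{j,j'\}$. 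Setting $y_{\underline{m}}:=(-1)^{j+j'+1}\omega_{jj'}$, a well-defined function of $\underline{m}\in I_{d-2,d}$ that vanishes identically iff all $\omega_{jj'}$ vanish, the hypothesis becomes exactly $\sum_{\underline{m}\in I_{d-2,d}}B\{\underline{l},\underline{m}\}\,y_{\underline{m}}=0$ for every $\underline{l}\in I_{d-2,d}$. Lemma~4.3, applied with the invertible matrix $B$ of order $d$ and $s=d-2$, forces $y_{\underline{m}}=0$ for all $\underline{m}$, hence $\omega_{jj'}=0$ for all $j<j'$, i.e. $U^{T}JU=0$, i.e. $U\in G/P$; this is the pattern already visible in Example~4.2.

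The routine obstacle is the bookkeeping of signs and index conventions in the Laplace step (the sorting sign $(-1)^{\tau}$, the orientation of the $2\times2$ minors relative to the symplectic form, and the identification of $N_{\underline{i}}(j,j')$ with a minor of $B$); the single genuinely essential observation, and the only place the symplectic structure enters, is that the complementary minor $N_{\underline{i}}(j,j')$ is independent of the summation variable $t$, so that summing over $t$ collapses the $2\times2$ minors precisely into the pairings $\omega_{jj'}$. Once that identity is established, both implications and the reduction to Lemma~4.3 are formal.
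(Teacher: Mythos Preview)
Your proof is correct and follows essentially the same route as the paper's: both derive the identity by a Laplace expansion along the rows indexed $t,n+t$, observe that the complementary minor $N_{\underline{i}}(j,j')$ (the paper's $D(\underline{i},j_1,j_2)$) is independent of $t$ so that summing over $t$ collapses the $2\times2$ minors into the symplectic pairings, and then for the converse pick an invertible $d\times d$ submatrix and invoke Lemma~4.3 with $s=d-2$. Your write-up is slightly more streamlined in that you do not split into the cases $\{t,n+t\}\cap\{i_1,\dots,i_{d-2}\}=\emptyset$ versus $\neq\emptyset$ (your determinantal formulation handles the degenerate case automatically), but the argument is the same.
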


\begin{proof}The proof is trivial for $d=2$ case, as same as stated in the previous example. We are going to prove the $d\geq 3$ case. Assume $U\in G_{d,2n}$.
Choose a set of basis of $U$ $u_1,u_2,\cdots,u_d$, and the matrix which uses $u_j$ as $j$-th column is (still denote by $U$) $U=(x_{i,j})_{2n\times d}$.

For $1\leq t\leq n, 1\leq j_1<j_2\leq d$, define
$$M(t,j_1,j_2)(U)=\left|\begin{matrix}
                        x_{t,j_1} & x_{t,j_2} \\
                        x_{n+t,j_1} & x_{n+t,j_2}
                      \end{matrix}\right|,$$
$$M(j_1,j_2)(U)=\Sigma_{t=1}^n M(t,j_1,j_2).$$

And for $\underline{i}=(i_1,i_2,\cdots,i_{d-2})\in I_{d-2,2n}, 1\leq j_1<j_2\leq d$, define
$$D(\underline{i},j_1,j_2)=$$$$\left|\begin{matrix}
                        x_{i_1,1}  &\cdots &x_{i_1,j_1-1}&x_{i_1,j_1+1}&\cdots&x_{i_1,j_2-1}&x_{i_1,j_2+1}&\cdots& x_{i_1,d} \\
                        x_{i_2,1}  &\cdots &x_{i_2,j_1-1}&x_{i_2,j_1+1}&\cdots&x_{i_2,j_2-1}&x_{i_2,j_2+1}&\cdots&x_{i_2,d} \\
                        \vdots        &       &\vdots       &\vdots       &      &\vdots       &\vdots       &       &\vdots\\
                        x_{i_l,1}  &\cdots &x_{i_l,j_1-1}&x_{i_l,j_1+1}&\cdots&x_{i_l,j_2-1}&x_{i_l,j_2+1}&\cdots&x_{i_l,d} \\
                        \vdots        &       &\vdots       &\vdots       &      &\vdots       &\vdots       &       &\vdots\\
                        x_{i_{d-2},1} &\cdots &x_{i_{d-2},j_1-1}&x_{i_{d-2},j_1+1}&\cdots&x_{i_{d-2},j_2-1}&x_{i_{d-2},j_2+1}&\cdots&x_{i_{d-2},d}
                      \end{matrix}\right|,$$
This is the determinant obtained through choosing $i_1,i_2,\cdots,i_{d-2}$-th rows of $U$ and deleting $j_1,j_2$-th columns.

\textbf{Case 1.}$\{t,n+t\}\cap\{i_1,i_2,\cdots,i_{d-2}\}=\emptyset$.

Let us make Laplace expansion for the determinant of $U_{i_1t_2\cdots i_{d-2}t(n+t)}$ by the rows which are exactly the $t,n+t$-rows in $U$, $$p_{(i_1,i_2,\cdots,i_{d-2},t,n+t)}(U)=$$
$$(-1)^{\#\{l|i_l<t\}+\#\{l|i_l<n+t\}+1}\Sigma_{1\leq j_1<j_2\leq d} (-1)^{j_1+j_2}M(t,j_1,j_2)D(\underline{i},j_1,j_2)$$

We should notice that ${\#\{l|i_l<t\}+\#\{l|i_l<n+t\}}=2d-4-\tau(i_1,i_2,\cdots,i_{d-2},t,n+t)$. Hence
$$ \begin{array}{l}(-1)^{\tau(i_1,i_2,\cdots,i_{d-2},t,n+t)}p_{(i_1,i_2,\cdots,i_{d-2},t,n+t)}(U)\\=\Sigma_{1\leq j_1<j_2\leq d} (-1)^{j_1+j_2+1}M(t,j_1,j_2)D(\underline{i},j_1,j_2).\end{array}$$

\textbf{Case 2.}$\{t,n+t\}\cap\{i_1,i_2,\cdots,i_{d-2}\}\neq\emptyset$

By our notation, now it follows immediately that $p_{(i_1,i_2,\cdots,i_{d-2},t,n+t)}=0$ and
$$\Sigma_{1\leq j_1<j_2\leq d} (-1)^{j_1+j_2+1}M(t,j_1,j_2)D(\underline{i},j_1,j_2)=0.$$

So for any $\{t,n+t\}\cap\{i_1,i_2,\cdots,i_{d-2}\}$ case there is
$$ \begin{array}{l}(-1)^{\tau(i_1,i_2,\cdots,i_{d-2},t,n+t)}p_{(i_1,i_2,\cdots,i_{d-2},t,n+t)}(U)\\=\Sigma_{1\leq j_1<j_2\leq d} (-1)^{j_1+j_2+1}M(t,j_1,j_2)D(\underline{i},j_1,j_2).\end{array}$$
It leads to that for fixed $\underline{i}\in I_{d-2,2n}$,
$$\begin{array}{l}(\Sigma_{t=1}^n (-1)^{\tau(i_1,i_2,\cdots,i_{d-2},t,n+t)}p_{(i_1,i_2,\cdots,i_{d-2},t,n+t)})(U)\\=\Sigma_{t=1}^n\Sigma_{1\leq j_1<j_2\leq d} (-1)^{j_1+j_2+1}M(t,j_1,j_2)D(\underline{i},j_1,j_2)\\=\Sigma_{1\leq j_1<j_2\leq d} (-1)^{j_1+j_2+1}(\Sigma_{t=1}^n M(t,j_1,j_2))D(\underline{i},j_1,j_2)\\=\Sigma_{1\leq j_1<j_2\leq d}(-1)^{j_1+j_2+1}M(j_1,j_2)D(\underline{i},j_1,j_2).\end{array}$$

$U\in G/P$ if and only if $1\leq j_1<j_2\leq d$,$u_{j_1}^TJu_{j_2}=0$. Meanwhile,
$$u_{j_1}^TJu_{j_2}=M(j_1,j_2).$$

Thus, if $U\in G/P$, we have $\underline{i}\in I_{d-2,2n}, 1\leq j_1>j_2\leq d$,$$\begin{array}{l}(\Sigma_{t=1}^n (-1)^{\tau(i_1,i_2,\cdots,i_{d-2},t,n+t)}p_{(i_1,i_2,\cdots,i_{d-2},t,n+t)})(U)\\=\Sigma_{1\leq j_1<j_2\leq d}(-1)^{j_1+j_2+1}M(j_1,j_2)D(\underline{i},j_1,j_2)\\=\Sigma_{1\leq j_1<j_2\leq d}(-1)^{j_1+j_2+1}\cdot 0\cdot D(\underline{i},j_1,j_2)=0,\end{array}$$Then the necessity is true.

We now turn to the sufficiency. We only need to prove the linear equations (arrange as columns in arbitrary total order, for example dictionary order) for $Y=(y_{j_1j_2})^T$ $$\Sigma_{1\leq j_1<j_2\leq d}D(\underline{i},j_1,j_2)y_{j_1j_2}=0,\forall\underline{i}\in I_{d-2,2n}$$only has solution zero. Because $rank U=d$, there exist $1\leq\tilde{i}_1<\tilde{i}_2<\cdots<\tilde{i}_d\leq 2n$ such that $U_{\tilde{i}_1\tilde{i}_2\cdots\tilde{i}_d}$ is invertible. Consider arbitrary $d-2$ distinct elements $i_1<i_2<\cdots<i_{d-2}$ in $\{\tilde{i}_1,\tilde{i}_2,\cdots,\tilde{i}_d\}$, they give a $\underline{i}\in I_{d-2,2n}$, and thanks to lemma 4.3 sufficiency is proved immediately.
\end{proof}

\subsection{The defining equations of Richardson varieties on $Sp_{2n}(k)/P_d$}
Consider the permutation in $S_{2n}$ $$s=\left(\begin{matrix}
             1 & 2 & \cdots & n & n+1 & n+2 & \cdots & 2n \\
             1 & 2 & \cdots & n & 2n & 2n-1 &\cdots& n+1
           \end{matrix}\right),$$it induces the Bruhat order on $I_{d,2n}$ which is determined by Shubert variety on Grassmannian to a new order (denote by $\geq^{Sp}$)
$$ \underline{i}\geq^{Sp} \underline{j}\Leftrightarrow \left\{\begin{matrix}
                                                          i_t\geq j_t &,\mbox{if }j_t\leq n \\
                                                          n <i_t\leq j_t &,\mbox{if }j_t> n
                                                       \end{matrix}\right.,\forall 1\leq t\leq d.$$

We should notice that the definition above is actually an extension of the previous definition of $\geq^{Sp}$ in section 3.1.2. We have only defined one on $I^{Sp}_{d,2n}$ in section 3.1.2, but the elements in $I^{Sp}_{d,2n}$ are not allowed to contain $t$ and $n+t$ at the same time, where$1\leq t\leq n$. For example $d=3,n=4$, $(125)\notin I^{Sp}_{3,8}$, because $5=4+1$. Thus we have no ideal about the order relation about $(125)$. For $\underline{i}= (i_1,i_2,\cdots,i_d)\in I_{d,2n}$ we write $s(\underline{i})=(s(i_1),s(i_2),\cdots,s(i_d))$, and denote the image set of $I_{d,2n}$ under the map $\underline{i}\mapsto s(\underline{i})$ by
$$\overline{I^{Sp}_{d,2n}}=\{\underline{i}=(i_1,i_2,\cdots, i_d)|1\leq i_1\leq i_2\leq i_r\leq n,2n\geq i_{r+1}\geq\cdots\geq i_d > n\},$$

Here we extend the previous definition of $\geq^{Sp}$ on $I^{Sp}_{d,2n}$ to  $\geq^{Sp}$ on $\overline{I^{Sp}_{d,2n}}$, that is why we use the same notation. Using the property stated at the end of section 4.1, we easily obtain the following proposition.

\begin{prop}
  All the $F=p_{w_1}p_{w_2}\cdots p_{w_m},w_1,w_2,\cdots,w_m\in\overline{I^{Sp}_{d,2n}},w_1\geq^{Sp}w_2\geq^{Sp}\cdots\geq^{Sp}w_m$ form a set of basis of the $m$ degree of homogeneous coordinate ring of $G_{d,2n}$. We call such $F$ a type C standard monomial with degree $m$.
\end{prop}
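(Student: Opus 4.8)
The plan is to reduce the statement about type C standard monomials to the known basis theorem for type A standard monomials on the Grassmannian (Proposition 4.2), by transporting everything through the algebra automorphism induced by the permutation $s \in S_{2n}$. First I would recall the construction from the end of Section 4.1: a fixed $s \in S_d$ (or here $s \in S_{2n}$ acting on indices) induces an automorphism $\varphi_s$ of $\wedge^d k^{2n}$ sending $e_{\underline{i}} \mapsto e_{s(\underline{i})}$, hence an automorphism of the homogeneous coordinate ring of $G_{d,2n}$ which, up to sign, sends $p_{\underline{i}}$ to $p_{s(\underline{i})}$. The paper has already observed that applying such an automorphism carries the type A standard monomial basis to another basis of each graded piece, now indexed by the image of $I_{d,2n}$ under $s$ and ordered by the pushed-forward Bruhat order. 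So the content to check is purely bookkeeping: that the pushed-forward order on $\overline{I^{Sp}_{d,2n}} = s(I_{d,2n})$ is exactly the relation $\geq^{Sp}$ defined at the start of Section 4.3, and that "$F = p_{w_1}\cdots p_{w_m}$ standard" in the new sense corresponds bijectively to "$\varphi_s^{-1}(F)$ standard" in the type A sense.

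The key steps, in order, would be: (i) verify that $s$ restricts to a bijection $I_{d,2n} \to \overline{I^{Sp}_{d,2n}}$ — this is immediate since $s$ is the identity on $\{1,\dots,n\}$ and reverses $\{n+1,\dots,2n\}$, so an increasing tuple in $I_{d,2n}$ maps to a tuple that is increasing among entries $\leq n$ and decreasing among entries $> n$, which is precisely the shape defining $\overline{I^{Sp}_{d,2n}}$; (ii) check that for $\underline{i},\underline{j} \in I_{d,2n}$ one has $\underline{i} \geq \underline{j}$ in the type A Bruhat order (i.e. $i_t \geq j_t$ for all $t$) if and only if $s(\underline{i}) \geq^{Sp} s(\underline{j})$, by unwinding the definition: on entries landing in $\{1,\dots,n\}$ the order is unchanged, and on entries landing in $\{n+1,\dots,2n\}$ the reversal $x \mapsto 3n+1-x$ flips "$\geq$" to "$\leq$", matching the second clause "$n < i_t \leq j_t$" in the definition of $\geq^{Sp}$; (iii) conclude that a product $p_{w_1}\cdots p_{w_m}$ with $w_1 \geq^{Sp} \cdots \geq^{Sp} w_m$ in $\overline{I^{Sp}_{d,2n}}$ is exactly the image under $\varphi_s$ (up to a nonzero scalar from the sign) of a type A standard monomial $p_{s^{-1}(w_1)}\cdots p_{s^{-1}(w_m)}$, with $s^{-1}(w_1) \geq \cdots \geq s^{-1}(w_m)$; (iv) invoke Proposition 4.2(2): since $\varphi_s$ is a graded algebra automorphism, it sends a basis of the degree $m$ piece to a basis of the degree $m$ piece, so the type C standard monomials of degree $m$ form a basis.

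The main obstacle — really the only place where care is needed — is step (ii): making sure the order-reversal is handled correctly on the "$>n$" block, including the boundary behaviour and the fact that $\geq^{Sp}$ on $\overline{I^{Sp}_{d,2n}}$ is being defined componentwise rather than through a genuine Weyl-group Bruhat order (the tuples here may contain both $t$ and $n+t$, so they need not come from $I^{Sp}_{d,2n}$ or from $W/W_P$). I would therefore state $\geq^{Sp}$ on $\overline{I^{Sp}_{d,2n}}$ explicitly as the componentwise relation, observe that $s$ is an order-isomorphism from $(\{1,\dots,2n\},\geq)$ restricted appropriately onto $(\{1,\dots,2n\},\geq^{Sp})$ in the sense required, and then everything else is the formal transport-of-structure argument. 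A minor additional point is that the sign ambiguity in $\varphi_s(p_{\underline{i}}) = \pm p_{s(\underline{i})}$ does not affect linear independence or the spanning property, so it can be absorbed harmlessly.
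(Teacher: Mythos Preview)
Your approach is correct and is essentially the same as the paper's: the paper does not give a formal proof at all, but simply remarks immediately before the proposition that ``using the property stated at the end of section 4.1, we easily obtain the following proposition,'' which is precisely the transport-of-structure argument through the automorphism induced by $s$ that you spell out in steps (i)--(iv). Your write-up is in fact considerably more detailed than anything in the paper, and your observation about the sign ambiguity and the care needed in step (ii) (the order-reversal on the block $\{n+1,\dots,2n\}$) are exactly the points the paper leaves implicit.
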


Let $G=Sp_{2n}(k),P=P_d$.

We are now in the position to consider the Schubert variety $X_u$, opposite Schubert variety $X^v$ and Richardson variety $R(u,v)$ on $G/P$, where $u,v\in W/W_P$ can be indexed by $I^{Sp}_{d,2n}$.

Let $s\in S_{2n}$ be defined as above, then $s$ induces an automorphism of algebraic variety $\mathbb{P}(\wedge^d k^n)$
$$\varphi: [e_{i_1}\wedge e_{i_2}\cdots\wedge e_{i_d}]\mapsto [e_{s(i_1)}\wedge e_{s(i_2)}\cdots\wedge e_{s(i_d)}].$$

For $u,v\in I_{d,2n},s(u),s(v)\in \overline{I^{Sp}_{d,2n}}$. Hence the restriction of $\varphi^{-1}$ on the Richardson variety $R(u,v)$ on $G/P$ gives an embedding of varieties into the Richardson variety $R^0(s^{-1}(u),s^{-1}(v))$ on Grassmannian $G_{d,2n}$ (notation as defined at the end of section 3.2). Thus we can view $R(u,v)$ as closed subvariety of $R^0(s^{-1}(u),s^{-1}(v))$.
\begin{theo}
For $U\in R^0(s^{-1}(u),s^{-1}(v))$, $U\in R(u,v)$ if and only if  \begin{itemize}
 \item[(1)]when $d=2$, $U$ is annihilated by $\Sigma_{t=1}^n p_{(t,n+t)}$.
 \item[(2)]when $d\geq 3$, $U$ can be annihilated by all the
 $$\Sigma_{t=1}^n (-1)^{\tau(i_1,i_2,\cdots,i_{d-2},t,n+t)}p_{(i_1,i_2,\cdots,i_{d-2},t,n+t)}$$where $(i_1,i_2,\cdots,i_{d-2})=\underline{i}\in I_{d-2,2n}$, and $\tau$ is inverse number.\end{itemize}
\end{theo}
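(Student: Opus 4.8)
The plan is to obtain this as a consequence of Theorem~4.4 together with the identification, set up just before the statement, of $R(u,v)$ with a subvariety of the Grassmannian Richardson variety $R^0(s^{-1}(u),s^{-1}(v))$ via the Pl\"ucker automorphism $\varphi$. The point is that, after this transport, $R(u,v)$ is exactly the intersection of $R^0(s^{-1}(u),s^{-1}(v))$ with the image under $\varphi^{-1}$ of the closed subvariety $G/P=\{U\in G_{d,2n}\mid JU\perp U\}$. Since Theorem~4.4 detects membership in $G/P$ set-theoretically by the vanishing of the displayed quadratic relations, restricting those same relations to $R^0(s^{-1}(u),s^{-1}(v))$ will detect membership in $R(u,v)$, which is the assertion.

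Concretely I would proceed in three steps. First, using the descriptions of Section~3.1.2, namely $X_u=\{U\in G/P\mid \dim(U\cap K_{i_t})\ge t\ \forall t\}$ and $X^v=\{U\in G/P\mid \dim(U\cap K_{\sigma(i_t)})\le t-1\ \forall t\}$, together with the observation that the subspaces $K_0,K_1,\dots,K_{2n}$, listed in order of inclusion, form the standard coordinate flag of $k^{2n}$ for the basis reordered by $s$, I would check that $\varphi^{-1}$ sends the incidence conditions defining $X_u$ (resp.\ $X^v$) to those defining, on $G_{d,2n}$, the Schubert variety indexed by $s^{-1}(u)$ (resp.\ the opposite Schubert variety indexed by $s^{-1}(v)$), while sending $G/P$ to $\varphi^{-1}(G/P)$. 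Taking intersections one gets
\[\varphi^{-1}(R(u,v))=\varphi^{-1}(X_u)\cap\varphi^{-1}(X^v)=\varphi^{-1}(G/P)\cap R^0(s^{-1}(u),s^{-1}(v)).\]
Second, I would note that $\varphi^{-1}(G/P)$ is again a subvariety of the form $\{W\in G_{d,2n}\mid J'W\perp W\}$, where $J'$ is the symplectic form obtained by conjugating $J$ with the permutation matrix of $s$; the argument of Theorem~4.4 (Laplace expansion along the two symplectically paired rows, assembled over $\underline i\in I_{d-2,2n}$, and Lemma~4.3 for the converse) applies with only notational changes and exhibits $\varphi^{-1}(G/P)$ as the common zero locus in $G_{d,2n}$ of the quadratic relations in the statement --- the single relation $\sum_{t=1}^n p_{(t,n+t)}$ for $d=2$, and the relations $\sum_{t=1}^n(-1)^{\tau(i_1,\dots,i_{d-2},t,n+t)}p_{(i_1,\dots,i_{d-2},t,n+t)}$, $\underline i\in I_{d-2,2n}$, for $d\ge 3$. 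Third, restricting these relations to $R^0(s^{-1}(u),s^{-1}(v))$ and combining with the displayed identity gives exactly: for $U\in R^0(s^{-1}(u),s^{-1}(v))$ one has $U\in R(u,v)$ iff $U\in\varphi^{-1}(G/P)$ iff all the listed relations vanish at $U$, with the split into the two cases inherited from Theorem~4.4.

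The main obstacle is not geometric --- Theorem~4.4 already supplies the one substantive input, that $G/P$ sits inside the Grassmannian as a closed subvariety with explicit quadratic defining equations --- but the bookkeeping around $\varphi$ and the involution $s$. One must track precisely how $\varphi$ acts on Pl\"ucker coordinates, so that $p_{\underline i}\circ\varphi=\pm p_{s(\underline i)}$, and how $s$ permutes the symplectically paired indices, in order to be sure the transported defining equations appear in exactly the displayed form (correct index pairs, correct signs $(-1)^\tau$); and one has to be content at this stage with equality of zero loci rather than of ideals, the ideal-theoretic refinement on $k[G/P]$ being the separate content of Theorem~4.11. Once those identifications are carried out consistently, the argument is routine.
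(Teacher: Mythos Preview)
Your approach is essentially the same as the paper's: both reduce Theorem~4.6 to Theorem~4.4 via the identification $\varphi^{-1}$, observing that $\varphi^{-1}R(u,v)$ is exactly the locus inside $R^0(s^{-1}(u),s^{-1}(v))$ where the isotropy condition $JU\perp U$ (in the appropriate coordinates) holds, and then invoking the equations already established for $G/P$. The paper's proof is a two-line sketch (recall the $K_i$ in the $s$-reordered basis, write $\varphi^{-1}R(u,v)=\{U\in R^0(s^{-1}(u),s^{-1}(v))\mid JU\perp U\}$, and cite the method of Theorem~4.4); your three-step plan is a more explicit unpacking of exactly this, and your flagging of the $\varphi$/$s$ bookkeeping (how the symplectic form transports, how the paired indices and signs behave) is the only place where genuine care is needed---indeed the paper's terse line with $J$ rather than the conjugated form glosses over precisely the point you single out.
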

\begin{proof} Let us recall $$K_i=Span_k\{e_{s(1)},e_{s(2)},\cdots,e_{s(i)}\},1\leq i\leq 2n,$$ and $$\varphi^{-1}R(u,v)=\{U\in R^0(s^{-1}(u),s^{-1}(v))|JU\perp U\},$$This follows by the same method as in theorem 4.4.\end{proof}

\begin{prop}
For $\underline{i}\in I^{Sp}_{d,2n},\underline{j}\in \overline{I^{Sp}_{d,2n}}$,$\underline{i}\geq^{Sp}\underline{j}$ if and only if
$$p_{\underline{j}}|_{X_{\underline{i}}}\neq 0.$$
\end{prop}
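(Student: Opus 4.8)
The plan is to prove the two implications separately, using throughout the description $X_{\underline{i}}=\{U\in G/P:\dim(U\cap K_{i_t})\geq t\ \text{for all}\ 1\leq t\leq d\}$ from Section 3.1.2, where $K_m=\mathrm{Span}_k\{e_l:l\leq^{Sp}m\}$ and $\leq^{Sp}$ is regarded as the total order $1<^{Sp}2<^{Sp}\cdots<^{Sp}n<^{Sp}2n<^{Sp}2n-1<^{Sp}\cdots<^{Sp}n+1$, together with the fact that $X_{\underline{i}}=\overline{B.e_{\underline{i}}}$.

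\emph{If $p_{\underline{j}}|_{X_{\underline{i}}}\neq 0$, then $\underline{i}\geq^{Sp}\underline{j}$.} I would argue the contrapositive, exactly as in the type A (Grassmannian) case. Assume $\underline{i}\not\geq^{Sp}\underline{j}$ and let $t_0$ be the least index with $i_{t_0}\not\geq^{Sp}j_{t_0}$; since $\geq^{Sp}$ is total this forces $j_{t_0}>^{Sp}i_{t_0}$, and since $\underline{j}$ is $\geq^{Sp}$-increasing, all of $j_{t_0},j_{t_0+1},\dots,j_d$ are $>^{Sp}i_{t_0}$, so none of $e_{j_{t_0}},\dots,e_{j_d}$ lies in $K_{i_{t_0}}$. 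For an arbitrary $U\in X_{\underline{i}}$ pick a basis $u_1,\dots,u_d$ of $U$ with $u_1,\dots,u_{t_0}\in U\cap K_{i_{t_0}}$; in the $d\times d$ matrix formed by the rows $j_1,\dots,j_d$ of $(u_1\mid\cdots\mid u_d)$ whose determinant is $p_{\underline{j}}(U)$ (up to sign), the $t_0$ columns $u_1,\dots,u_{t_0}$ vanish in the $d-t_0+1$ rows $j_{t_0},\dots,j_d$ and are therefore linearly dependent, so $p_{\underline{j}}(U)=0$. Hence $p_{\underline{j}}$ vanishes on all of $X_{\underline{i}}$.

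\emph{If $\underline{i}\geq^{Sp}\underline{j}$, then $p_{\underline{j}}|_{X_{\underline{i}}}\neq 0$.} Here I would exhibit a point of $X_{\underline{i}}$ not annihilated by $p_{\underline{j}}$. If $\underline{j}\in I^{Sp}_{d,2n}$, the $T$-fixed point $e_{\underline{j}}=\mathrm{Span}_k\{e_{j_1},\dots,e_{j_d}\}$ already lies in $G/P$ (the absence of a colliding pair $\{t,n+t\}$ in $\underline{j}$ is precisely what gives $Je_{\underline{j}}\perp e_{\underline{j}}$); it lies in $X_{\underline{i}}$ since $j_s\leq^{Sp}j_t\leq^{Sp}i_t$ for $s\leq t$ yields $\dim(e_{\underline{j}}\cap K_{i_t})=\#\{s:j_s\leq^{Sp}i_t\}\geq t$; and $p_{\underline{j}}(e_{\underline{j}})=\pm 1\neq 0$, which settles this case. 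If instead $\underline{j}\in\overline{I^{Sp}_{d,2n}}\setminus I^{Sp}_{d,2n}$ (so $\underline{j}$ contains a colliding pair), no $T$-fixed point of $G/P$ can detect $p_{\underline{j}}$, so I would pass to the dense orbit: $p_{\underline{j}}(b.e_{\underline{i}})=\pm\det\big(b[\underline{j};\underline{i}]\big)$, the minor of $b$ on rows $\underline{j}$ and columns $\underline{i}$, so $p_{\underline{j}}|_{X_{\underline{i}}}\neq 0$ iff this minor is not identically zero on the standard symplectic Borel $B$. A short computation from the block shape $b=\begin{pmatrix}A&AS\\0&(A^{-1})^{T}\end{pmatrix}$ of $B$ (with $A$ invertible upper triangular and $S$ symmetric, as forced by the symplectic conditions of Section 2.2) shows that $b_{pq}$ may be nonzero only when $q\geq^{Sp}p$ — that is, $B$ is ``upper triangular for the order $\geq^{Sp}$'' — and the hypothesis $i_t\geq^{Sp}j_t$ then supplies a feasible diagonal term $\prod_t b_{j_t,i_t}$ in the Laplace expansion of $\det(b[\underline{j};\underline{i}])$; what remains is to show this term is not cancelled by the others.

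The step I expect to be the main obstacle is exactly this non-cancellation for colliding $\underline{j}$: the symplectic Borel is a \emph{proper} subvariety of the full $\geq^{Sp}$-upper-triangular subgroup of $GL_{2n}$, so the presence of a feasible diagonal term does not by itself force the minor to be nonzero. I would handle it by Laplace-expanding $\det(b[\underline{j};\underline{i}])$ along the rows of $\underline{j}$ lying in $\{n+1,\dots,2n\}$: the vanishing block $b[\,p>n\,;\,q\leq n\,]=0$ together with the inclusion $\underline{j}\cap\{n+1,\dots,2n\}\subseteq\underline{i}\cap\{n+1,\dots,2n\}$ (which is forced by $\underline{i}\geq^{Sp}\underline{j}$) collapses the minor, up to sign, to a square minor of $A\,[\,I_n\mid S\,]$ on the ``small'' rows of $\underline{j}$, and then exhibiting an explicit $(A,S)$ — for instance a generic unipotent upper triangular $A$ with a generic symmetric $S$, or a one-parameter degeneration of these tuned so that a single monomial survives — on which this reduced minor is nonzero. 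If the direct computation turns out to be cumbersome, an alternative is to use the relations of Theorem 4.4 to rewrite $\pm p_{\underline{j}}$ on $G/P$ as a signed sum of Plücker coordinates $p_{(\underline{j}_0,t,n+t)}$ sharing the same ``collision-free part'' $\underline{j}_0$, and to induct on the number of colliding pairs of $\underline{j}$, reducing to the already-settled case $\underline{j}\in I^{Sp}_{d,2n}$; the subtlety there is keeping the Bruhat constraint $\leq^{Sp}\underline{i}$ and the cancellations among the produced terms under control.
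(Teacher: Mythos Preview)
Your vanishing direction ($\underline{i}\not\geq^{Sp}\underline{j}\Rightarrow p_{\underline{j}}|_{X_{\underline{i}}}=0$) is correct and is a repackaging of the paper's argument: the paper writes a generic $U\in X_{\underline{i}}$ explicitly as $b.e_{\underline{i'}}$ with $b\in B$ and $\underline{i'}\leq^{Sp}\underline{i}$, displays the resulting block–triangular shape of the $d\times d$ minor $U_{j_1\cdots j_d}$, and observes that a zero appears on its diagonal; your version extracts exactly the same linear-algebra fact from the Schubert condition $\dim(U\cap K_{i_t})\geq t$ via the critical index $t_0$. These are equivalent phrasings of the same computation.

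For the non-vanishing direction the paper is much terser than you: it simply says that $\underline{i}\geq^{Sp}\underline{j}$ means $e_{\underline{j}}\in X_{\underline{i}}$ and evaluates $p_{\underline{j}}(e_{\underline{j}})=1$. This matches your argument when $\underline{j}\in I^{Sp}_{d,2n}$. You are right, however, that for $\underline{j}\in\overline{I^{Sp}_{d,2n}}\setminus I^{Sp}_{d,2n}$ the span $e_{\underline{j}}$ is \emph{not} isotropic and hence not a point of $G/P$, so the paper's one-line argument does not cover that case as written. In other words, the extra difficulty you flag is real, and the paper simply does not address it; your proposal is more careful here than the paper's own proof. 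Your Laplace-expansion strategy on the minor $\det(b[\underline{j};\underline{i}])$ for $b=\begin{pmatrix}A&AS\\0&(A^{T})^{-1}\end{pmatrix}$ is a sound way to close the gap: expanding along the rows of $\underline{j}$ in $\{n+1,\dots,2n\}$ does collapse the minor to a product of a lower-triangular minor of $(A^{T})^{-1}$ (nonzero for generic $A$ since $\underline{i}\geq^{Sp}\underline{j}$ forces these rows to sit below the corresponding columns of $\underline{i}$) and a minor of $A[\,I\mid S\,]$ on the small rows of $\underline{j}$, and for the latter a one-parameter choice of $S$ with a single well-placed nonzero symmetric entry per colliding pair already makes a unique monomial survive. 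Your alternative inductive route via the relations of Theorem~4.4 also works but is heavier to control; the direct minor computation is cleaner.
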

\begin{proof} Necessity: By the definition of Bruhat order, $\underline{i}\geq^{Sp}\underline{j}$ if and only if $e_{\underline{j}}\in X_{\underline{i}}$. Moreover, $p_{\underline{j}}(e_{\underline{j}})=\delta_{j,j}=1$. It implies $$p_{\underline{j}}|_{X_{\underline{i}}}\neq 0.$$

Sufficiency: Otherwise suppose $\underline{i}<^{Sp}\underline{j}$. We have $X_{\underline{i}}=\cup_{{\underline{i'}}\leq^{Sp}\underline{i}}B.e_{\underline{i'}}$, where the elements in $B$ are all in the form
$$b=\left(\begin{matrix}
          a_1 & * & \cdots & * & * & * & * &  & * & * \\
            & a_2 & \cdots & * & * &  *&  &  &  & * \\
           &  & \ddots &  & \vdots & \vdots &  & \ddots &  & \vdots \\
            &   &  &  a_{n-1} & * & * &  &  & &*  \\
            &   &  &   & a_n & * & * & \cdots & * & * \\
            &   &  &   &   & a_1^{-1} &   &  &   &   \\
            &  &  &  &   & * & a_2^{-1}& &   &   \\
           &  &  &  &  & \vdots &  & \ddots &  &  \\
            &  &  &  &   &* & * & \cdots & a_{n-1}^{-1} &   \\
            &   &  &   &   & * & * & \cdots & * & a_n^{-1}
        \end{matrix}\right),$$
For arbitrary $U\in X_{\underline{i}}$, there must be $U=Span_k\{b.e_{i'_1},b.e_{i'_2},\cdots,b.e_{i'_d}\}a,\underline{i'}\leq^{Sp}\underline{i},b\in B$. The matrix which uses $b.e_{i'_1},b.e_{i'_2},\cdots,b.e_{i'_d}$ as columns has the form (still denote by $U$)

$$U=\left(\begin{array}{ccc|ccc}
          *       & *      & *      & * & * & *  \\
          \vdots  &        &        &   &   &    \\
          a_{i_1} &\ddots  & \vdots & \vdots &  & \vdots   \\
          0       &        & a_{i_r} &   &   &    \\
          0       &  0      & 0      & *  & *  &  *  \\
          \hline
                  &        &        & 0  & 0 & 0  \\
                  &        &        & 0  &  & a_{i_d}\\
                  &        &        & a_{i_{r+1}} & \iddots & \vdots   \\
                  &        &        & \vdots & & \\
                  &        &        & * & * & *
        \end{array}\right)$$

$p_{\underline{j}}(U)=det(U_{j_1j_2\cdots j_d})$. Since $\underline{j}>^{Sp}\underline{i}\geq^{Sp}\underline{i'}$, where $U_{j_1j_2\cdots j_d}$ must be like
$$\left(\begin{matrix}
          X & * \\
          0 & Y
        \end{matrix}\right),X\mbox{is upper triangular},Y=\left(\begin{matrix}
                                                      &  & 1 \\
                                                      & \iddots &  \\
                                                     1 &  &
                                                   \end{matrix}\right)Y',Y'\mbox{is upper triangular}.$$

And there must be some zero elements in all the diagonal elements of $X$ and $Y'$ (determined by $t$ such that $j_t>i'_t$). Thus $det(U_{j_1j_2\cdots j_d})=det(X)det(Y)=0$.
\end{proof}
\begin{prop}
For $\underline{i}\in I^{Sp}_{d,2n},\underline{j}\in \overline{I^{Sp}_{d,2n}}$,$\underline{i}\leq^{Sp}\underline{j}$ if and only if
$$p_{\underline{j}}|_{X^{\underline{i}}}\neq 0.$$
\end{prop}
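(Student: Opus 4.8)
The plan is to mirror the structure of Proposition 4.9 but work with the opposite Borel subgroup $B^-$ and the opposite Schubert variety $X^{\underline{i}}=\overline{B^-.e_{\underline{i}}}=\cup_{\underline{i'}\geq^{Sp}\underline{i}}B^-.e_{\underline{i'}}$ in place of $X_{\underline{i}}$. For the necessity direction, I would argue exactly as before: if $\underline{i}\leq^{Sp}\underline{j}$ then by the definition of Bruhat order $e_{\underline{j}}\in X^{\underline{i}}$, and since $p_{\underline{j}}(e_{\underline{j}})=\delta_{\underline{j},\underline{j}}=1$, the coordinate function $p_{\underline{j}}$ cannot vanish identically on $X^{\underline{i}}$, i.e. $p_{\underline{j}}|_{X^{\underline{i}}}\neq 0$.

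For the sufficiency direction I would argue by contraposition: assume $\underline{i}\not\leq^{Sp}\underline{j}$, i.e. $\underline{i}>^{Sp}\underline{j}$ (using that $\geq^{Sp}$ is a total order on the relevant index set, or more carefully that failure of $\underline{i}\leq^{Sp}\underline{j}$ forces $i_t >^{Sp} j_t$ for some coordinate $t$), and show $p_{\underline{j}}$ vanishes on all of $X^{\underline{i}}$. Writing a general $U\in X^{\underline{i}}$ as $\mathrm{Span}_k\{b^-.e_{i'_1},\dots,b^-.e_{i'_d}\}$ with $\underline{i'}\geq^{Sp}\underline{i}$ and $b^-$ in the opposite Borel (lower-triangular on the $A$-block, with the block structure described in section 3.1.2), the matrix $U$ with these vectors as columns has a "transpose-flipped" echelon shape compared to the $X_{\underline{i}}$ case: the pivot $a_{i'_t}$ now sits at the bottom of its column among the first $n$ coordinates (and correspondingly for the last $n$). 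Then $U_{j_1\cdots j_d}=p_{\underline{j}}(U)$ is, up to a permutation of rows, block-triangular with diagonal blocks that are themselves lower-triangular (resp. anti-lower-triangular) matrices, and because $\underline{j} <^{Sp}\underline{i}\leq^{Sp}\underline{i'}$ there is some index $t$ with $j_t <^{Sp} i'_t$ forcing a zero on the relevant diagonal; hence $\det U_{j_1\cdots j_d}=0$.

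The main obstacle — and the only place requiring real care — is the bookkeeping of the echelon form of $U$ for $X^{\underline{i}}$: one must verify that the lower-triangular shape of $b^-$ together with the constraint $\underline{i'}\geq^{Sp}\underline{i}$ produces a matrix whose $\underline{j}$-rows form the claimed block-triangular pattern, and precisely identify which diagonal entry is forced to vanish when $j_t<^{Sp}i'_t$ (the roles of "top" and "bottom", and of "$\leq n$" versus "$>n$", are swapped relative to Proposition 4.9, and the $\sigma$-order interacts with the two blocks differently). Once that combinatorial picture is pinned down, the determinant computation $\det U_{j_1\cdots j_d}=\det X\cdot\det Y=0$ goes through verbatim as in the previous proposition. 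I would present this by explicitly exhibiting the analogue of the displayed matrix $U$ in Proposition 4.9, flipped appropriately, and then invoking the same determinant argument.
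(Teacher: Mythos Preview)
Your proposal is exactly what the paper does: its entire proof of this proposition is the single sentence ``In the same manner as proposition 4.7 we can see this proposition,'' and you have spelled out that dualization in detail, replacing $B$ by $B^-$ and reversing the echelon pattern.

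Two small remarks. First, your aside that $\geq^{Sp}$ is a total order is false on tuples (it is only partial), so ``$\underline{i}\not\leq^{Sp}\underline{j}$, i.e.\ $\underline{i}>^{Sp}\underline{j}$'' is not literally correct; but your ``more carefully'' reformulation---that some coordinate satisfies $i_t>^{Sp}j_t$, hence $i'_t>^{Sp}j_t$ for every $\underline{i'}\geq^{Sp}\underline{i}$---is exactly what the determinant argument needs, and matches how the paper's proof of Proposition~4.7 actually runs. Second, there is a subtlety in the necessity direction that both you and the paper's proof of Proposition~4.7 pass over: when $\underline{j}\in\overline{I^{Sp}_{d,2n}}\setminus I^{Sp}_{d,2n}$, the subspace $e_{\underline{j}}$ contains a pair $e_t,e_{n+t}$ and is therefore not isotropic, so $e_{\underline{j}}\notin G/P$ and in particular $e_{\underline{j}}\notin X^{\underline{i}}$; for such $\underline{j}$ the line ``$e_{\underline{j}}\in X^{\underline{i}}$ and $p_{\underline{j}}(e_{\underline{j}})=1$'' does not apply, and one must instead exhibit a point of the open cell $B^-.e_{\underline{i}}$ on which $p_{\underline{j}}$ is nonzero.
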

\begin{proof} In the same manner as proposition 4.7 we can see this proposition.\end{proof}
\begin{defn}
  For $F=p_{w_1}p_{w_2}\cdots p_{w_m},w_1,\cdots,w_m\in I^{Sp}_{d,2n}$, and Richardson variety $R(u,v)$, if $u \geq^{Sp} w_1\geq^{Sp} w_2\geq^{Sp}\cdots\geq^{Sp} w_m\geq^{Sp} v$, then we say $F$ is type C standard on $R(u,v)$, or a type C standard monomial on $R(u,v)$.
\end{defn}

By proposition 4.7 and 4.8, we obtain that a type $C$ standard monomial on $F$ is type C standard on $R(u,v)$, if and only if $F|_{R(u,v)}\neq 0$.

\begin{prop}
  If a linear combination of type C standard monomials on $R(u,v)$ (as a function on Grassmannian $G_{d,2n}$) restrict to $R(u,v)$ is zero, then this linear combination is $0$ as a function on Grassmannian as well.
\end{prop}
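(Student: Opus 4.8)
The plan is to reduce the statement to a linear-independence assertion and prove it by a double induction. Since the type C standard monomials are linearly independent as functions on all of $G_{d,2n}$ by Proposition 4.6, the conclusion ``the combination is $0$ as a function on the Grassmannian'' just says ``all its coefficients vanish''; so the proposition is equivalent to the assertion that the restrictions to $R(u,v)$ of the type C standard monomials on $R(u,v)$ are $k$-linearly independent. I would prove this in the style of the standard monomial arguments of \cite{Ri in Gras}: an outer induction on the length $l(u)-l(v)$ of the Bruhat interval $[v,u]\subset I^{Sp}_{d,2n}$ and an inner induction on the degree $m$, using as the two main inputs the criterion recorded after Definition 4.9 (a type C standard monomial $F$ restricts to a nonzero function on a Richardson variety exactly when it is type C standard on it, via Propositions 4.7 and 4.8) and the irreducibility of $R(u,v)$.

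For the base cases, if $l(u)=l(v)$ then $R(u,v)=\{e_u\}$ is a point, its homogeneous coordinate ring is one-dimensional in each degree and spanned there by $p_u^m$, which is nonzero since $p_u(e_u)=1$; and degree $m=0$ is trivial. For the inductive step, suppose $\sum_F c_F\,F|_{R(u,v)}=0$ with every $F=p_{w_1(F)}\cdots p_{w_m(F)}$ of degree $m\geq 1$ and $u\geq^{Sp}w_1(F)\geq^{Sp}\cdots\geq^{Sp}w_m(F)\geq^{Sp}v$. For each cover $v'$ of $v$ with $v\lessdot^{Sp}v'\leq^{Sp}u$, restrict the relation to the proper closed Richardson subvariety $R(u,v')\subset R(u,v)$: every $F$ with $w_m(F)\not\geq^{Sp}v'$ restricts to zero there (it fails to be type C standard on $R(u,v')$), and the survivors are exactly the type C standard monomials on $R(u,v')$, which have linearly independent restrictions to $R(u,v')$ by the outer induction; hence $c_F=0$ whenever $w_m(F)\geq^{Sp}v'$ for some such cover. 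Since any $w_m(F)>^{Sp}v$ dominates some cover $v'$ of $v$, and any such $v'$ satisfies $v'\leq^{Sp}w_m(F)\leq^{Sp}u$, this kills every $F$ with $w_m(F)>^{Sp}v$, and the relation collapses to $p_v\cdot G|_{R(u,v)}=0$ with $G=\sum_{F:\,w_m(F)=v}c_F\,p_{w_1(F)}\cdots p_{w_{m-1}(F)}$ a combination of degree-$(m-1)$ type C standard monomials on $R(u,v)$. Because $e_v\in R(u,v)$ and $p_v(e_v)=1$, the coordinate $p_v$ is a non-zero-divisor in the domain $k[R(u,v)]$, so $G|_{R(u,v)}=0$, and the inner induction finishes the step; combining with Proposition 4.6 gives the proposition.

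The step needing the most care is the one used implicitly several times: that a product $p_{w_1}\cdots p_{w_m}$ restricts to zero on $R(u,v')=X_u\cap X^{v'}$ as soon as one index $w_i$ leaves the interval $[v',u]$ — this is Propositions 4.7 and 4.8 applied factor by factor, using only that $R(u,v')$ lies inside both $X_u$ and $X^{v'}$ — together with the routine facts that $R(u,v)$ is irreducible and reduced and that Bruhat order on $I^{Sp}_{d,2n}$ is graded by $l$ (so covers exist and the induction is well-founded), both standard for Richardson varieties. An alternative, if one preferred to work on the Grassmannian, is to transport the problem through the automorphism $\varphi$ of section 4.3, identify $R(u,v)$ with $R^0(s^{-1}(u),s^{-1}(v))\cap\{JU\perp U\}$, present the latter by Theorem 4.9 as the vanishing locus inside $R^0(s^{-1}(u),s^{-1}(v))$ of the linear forms $\sum_t(-1)^{\tau(\underline i,t,n+t)}p_{(\underline i,t,n+t)}$, and combine the type A standard monomial theory with these relations and the straightening law to show the ``symplectic-admissible'' standard monomials span the quotient and then, via a Hilbert-polynomial comparison with $\dim R(u,v)=l(u)-l(v)$, that they form a basis. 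That route depends on the combinatorics of straightening modulo the symplectic relations and on the Hilbert-series identity, so the inductive argument above is the one I would actually carry out.
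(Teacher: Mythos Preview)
Your argument is correct and takes a genuinely different route from the paper's. The paper does not run an induction on $R(u,v)$ at all: it transports the problem to type A via the automorphism $\varphi$, embedding $R(u,v)$ into the type A Richardson variety $R^0(s^{-1}(u),s^{-1}(v))$, observes that the ideal $I_2$ of $R(u,v)$ inside $k[R^0]$ is the image of the ideal $I_1$ of $G/P$ inside $k[G_{d,2n}]$, so the vanishing relation $\sum c_iF_i$ agrees on $R^0$ with some $F\in I_1$, and then invokes the linear independence of standard monomials on $R^0$ from Proposition~4.2(1) to conclude $\sum c_iF_i\in I_1$; in particular the paper only draws the weaker conclusion ``zero on $G/P$'' (which is all that Theorem~4.11 needs). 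Your double induction on $l(u)-l(v)$ and the degree, restricting to $R(u,v')$ for Bruhat covers $v'$ and then cancelling $p_v$ using that $k[R(u,v)]$ is a domain, is self-contained---it never appeals to the type A Richardson result---and actually yields the sharper conclusion that all coefficients vanish; the price is that you import the irreducibility and reducedness of $R(u,v)$, which the paper does not establish. Two small remarks: the linear-independence input on $G_{d,2n}$ is Proposition~4.5 in the paper's numbering, not~4.6; and of the biconditional recorded after Definition~4.9 you only ever use the easy direction (not standard on $R(u,v')\Rightarrow$ restricts to zero), which follows directly from Propositions~4.7--4.8, so citing those instead avoids any appearance of circularity with the very statement you are proving.
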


\begin{proof} Let $I_1$ denote the ideal of $G/P$ in homogeneous coordinate ring of $G_{d,2n}$, and $I_2$ denote the ideal of $R(u,v)$ in homogeneous coordinate ring of $R^0(s^{-1}(u),$ $s^{-1}(v))$, then $I_2$ is the image of $I_1$ in homogeneous coordinate ring of $R^0(s^{-1}(u),s^{-1}(v))$.

Let $F_i,1\leq i\leq r$ be type C standard monomials on $R(u,v)$, and there exist $c_i$ which are not all zero, such that $$\Sigma_{i=1}^r c_iF_i|_{R(u,v)}=0.$$

Thus $\Sigma_{i=1}^r c_iF_i|_{R^0(s^{-1}(u),s^{-1}(v))}\in I_2$, that is $$\Sigma_{i=1}^r c_iF_i|_{R^0(s^{-1}(u),s^{-1}(v))}=F|_{R^0(s^{-1}(u),s^{-1}(v))},$$for some$F\in I_1$.

By proposition 4.2(1), and the fact that type C standard monomials are the images of type A standard monomials under an automorphism, distinct type C standard monomials are linear independent on homogeneous coordinate ring of $R^0(s^{-1}(u),s^{-1}(v))$. Therefore we have
$$\Sigma_{i=1}^r c_iF_i=F\in I_1.$$

Then this proposition is proved.\end{proof}

\begin{theo}
  The ideal of $R(u,v)(u\geq^{Sp} v)$ in homogeneous coordinate ring of $G/P$ is $I(u,v)=(\{p_\alpha |\alpha >^{Sp}u\mbox{ or }\alpha <^{Sp}v\})$.
\end{theo}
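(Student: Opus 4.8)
I want to identify the homogeneous coordinate ring of $R(u,v)$ inside that of $G/P$, so I first set up the relevant rings. Let $S$ denote the homogeneous coordinate ring of $G_{d,2n}$, let $I_1\subset S$ be the ideal of $G/P$ (given explicitly by Theorem 4.4), and let $J^0(u,v)\subset S$ be the ideal of the Grassmannian Richardson variety $R^0(s^{-1}(u),s^{-1}(v))$. By the standard monomial theory on the Grassmannian (Proposition 4.2, transported through the automorphism $\varphi$ as in Proposition 4.5), $J^0(u,v)$ is spanned by all Plücker monomials $p_{w_1}\cdots p_{w_m}$ that are \emph{not} type C standard on $R(u,v)$, equivalently it is generated by $\{p_\alpha\mid \alpha>^{Sp}u\text{ or }\alpha<^{Sp}v\}$ together with the type C straightening relations; concretely $I(u,v)$ as defined in the statement is the image of the ideal $(\{p_\alpha\mid\alpha>^{Sp}u\text{ or }\alpha<^{Sp}v\})+J^0(u,v)+I_1$ in $S/I_1$. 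The goal is to show that the ideal of $R(u,v)$ in $S/I_1$ is exactly $I(u,v)$.

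\textbf{The two inclusions.} One inclusion is easy: by Propositions 4.7 and 4.8, if $\alpha>^{Sp}u$ then $p_\alpha|_{X_{\underline u}}=0$, and if $\alpha<^{Sp}v$ then $p_\alpha|_{X^{\underline v}}=0$, so in either case $p_\alpha|_{R(u,v)}=0$; hence $I(u,v)$ is contained in the ideal of $R(u,v)$. Also, by Theorem 4.6, the additional defining equations of $G/P$ inside $R^0$ are already among the generators listed in Theorem 4.4, so nothing new is needed on the $G/P$ side. The substantive inclusion is the reverse: every homogeneous $g\in S/I_1$ vanishing on $R(u,v)$ lies in $I(u,v)$. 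For this I would take such a $g$, lift it to $\tilde g\in S$, and use the type C standard monomial basis of $S$ (Proposition 4.5) to write $\tilde g$ modulo $I_1$ as a linear combination of type C standard monomials $F_i=p_{w^{(i)}_1}\cdots p_{w^{(i)}_m}$ with $w^{(i)}_1\geq^{Sp}\cdots\geq^{Sp}w^{(i)}_m$. Split this sum: the monomials $F_i$ that are type C standard on $R(u,v)$ (i.e. satisfy $u\geq^{Sp}w^{(i)}_1$ and $w^{(i)}_m\geq^{Sp}v$) form one part, and all remaining monomials — those containing some factor $p_\alpha$ with $\alpha>^{Sp}u$ or $\alpha<^{Sp}v$ — lie in $I(u,v)$ by construction. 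Therefore $g$ equals, modulo $I(u,v)$, a linear combination of type C standard monomials on $R(u,v)$, and this combination restricts to $0$ on $R(u,v)$; by Proposition 4.10 it is then $0$ as a function on $G_{d,2n}$, hence $0$ in $S/I_1$, which forces $g\in I(u,v)$.

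\textbf{Main obstacle.} The one place requiring care is the reduction step that rewrites an arbitrary element of $S/I_1$ so that each surviving type C monomial either lies in $I(u,v)$ or is type C standard on $R(u,v)$: this uses that when one straightens a product of Plücker coordinates, any monomial appearing whose top factor is not $\leq^{Sp}u$ (resp. whose bottom factor is not $\geq^{Sp}v$) automatically contains a factor $p_\alpha$ with $\alpha>^{Sp}u$ (resp. $\alpha<^{Sp}v$), so that it genuinely lies in the ideal generated by those $p_\alpha$'s — i.e. that $I(u,v)$ really absorbs the whole non-standard-on-$R(u,v)$ part. This is a combinatorial bookkeeping fact about the order $\geq^{Sp}$ and the straightening law, and verifying it (for the $\varphi$-transported order on $\overline{I^{Sp}_{d,2n}}$, invoking Proposition 4.2 and the characterization in Theorem 4.4/4.6) is where the real work lies; the rest is a direct assembly of Propositions 4.7, 4.8, and 4.10.
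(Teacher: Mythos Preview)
Your proposal follows exactly the paper's approach: the easy inclusion via Propositions 4.7 and 4.8, then expanding an arbitrary element of the coordinate ring of $G/P$ in type C standard monomials (Proposition 4.5), splitting off those that lie in $I(u,v)$, and killing the remainder with Proposition 4.10. That is precisely what the paper does.

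The one place you diverge is in flagging a ``main obstacle'' about straightening. In fact there is no such obstacle, and the paper dispatches the point in a single sentence. Once you have invoked Proposition 4.5 to write your element as a $k$-linear combination of type C standard monomials $p_{w_1}\cdots p_{w_m}$ with $w_1\geq^{Sp}\cdots\geq^{Sp}w_m$, no further straightening occurs: each such monomial either has some factor $p_{w_i}$ with $w_i$ outside the interval $[v,u]$ under $\geq^{Sp}$, in which case the entire monomial lies in $I(u,v)$ by the very definition of that ideal, or every $w_i$ lies in $[v,u]$, in which case the monomial is (already) type C standard on $R(u,v)$. This dichotomy is tautological and requires no analysis of the straightening law; the bookkeeping you worry about simply does not arise. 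So your argument is correct, but you can drop the ``main obstacle'' paragraph entirely and the proof becomes the paper's.
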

\begin{proof} It is easy to see that $R(u,v)$ can be annihilated by $I(u,v)$. We only need to prove , for any $F$ in homogeneous coordinate ring of $G/P$, if $F|_{R(u,v)}=0$, then $F\in I(u,v)$.

Since all the type C standard monomials form a set of generators of homogeneous coordinate ring of $G/P$, assume $F=\Sigma a_iF_i+\Sigma b_jH_j$. Where $F_i,H_j$ are type C standard monomials, all $F_i$ belong to $I(u,v)$, and $H_j$ does not belong to $I(u,v)$. Equivalently, $$H_j=\Pi_w p_w,v\leq^{Sp} w\leq^{Sp} u,$$$H_j$ are type C standard on $R(u,v)$.

Because $F|_{R(u,v)}=0,F_i|_{R(u,v)}=0$, $\Sigma b_jH_j|_{R(u,v)}=0$. By the proposition 4.10, $\Sigma b_jH_j=0$ on $G/P$.

Thus, $F=\Sigma a_iF_i\in I(u,v)$ on $G/P$.\end{proof}

\end{document}